\newtheorem{theorem}{Theorem}
\newtheorem{corollary}{Corollary}
\newtheorem{lemma}{Lemma}
\newtheorem{proposition}{Proposition}
\newtheorem{fact}{Fact}
\def\R{{\mathbb R}}
\def\inte{{\rm int}}
\def\eps{{\varepsilon}}
\def\conv{{\rm conv}}
\def\vol{{\rm vol}}
\def\1{\mathds{1}}
\def\supp{{\rm supp}}
\def\be{\begin{equation}}
\def\ee{\end{equation}}
\def\Lc{\mathcal{L}}
\begin{document}

	\title{The L\"{o}wner function of  a log-concave function 
		\footnote{Keywords: John ellipsoid, L\"owner ellipsoid, log-concave functions,  2010 Mathematics Subject Classification: 52A20, 52A41 }}
	
	\author{Ben Li {\thanks {Partially supported by NSF grants DMS-1600124 and DMS-1700168, and by ERC grant-770127}} , Carsten Sch\"utt  and Elisabeth M. Werner
		\thanks{Partially supported by  NSF grant DMS-1811146 }}
	
	\date{}

	\maketitle
	
	\begin{abstract}
	We  introduce the notion of L\"{o}wner (ellipsoid) function for a log-concave function and  show that it is an
	extension of the L\"{o}wner ellipsoid for convex bodies. We investigate its  duality relation to the recently defined 
	John (ellipsoid) function \cite{Alonso-Gutiérrez2017}. For convex bodies, John and L\"{o}wner ellipsoids are dual to each other.
	Interestingly, this need not be the case for the John  function and the L\"{o}wner  function.

	\end{abstract}
	\vskip 3mm
\section{ Introduction}

Asymptotic convex geometry studies the 
properties of  convex bodies
with  emphasis on the dependence  of geometric and analytic invariants on the dimension.
The convexity assumption enforces concentration of volume in a canonical way and
it is a main question if under natural normalizations  the answers to fundamental questions are independent of the dimension.
\par
\noindent
The most classical normalizations  of convex
bodies arise as solutions of extremal problems. These normalizations include the  isotropic position, which arose from classical mechanics of the 19th
century and 
which is related to a famous open problem in 
convex geometry, the hyperplane conjecture (see, e.g., the survey \cite{KlartagWerner}).  
The best results currently available there are due Bourgain \cite{Bourgain1991} and Klartag \cite{Klartag06}.
 \newline
Other positions   are the John position, also called maximal volume ellipsoid position 
and  the L\"owner position, also called minimal volume ellipsoid position.
The right choice of a position is important for
the study of affinely invariant quantities
and their related isoperimetric inequalities.
For instance, John and L\"owner position are related to the 
Brascamp-Lieb inequality and its reverse \cite{Ball1989, Fbarthe1998}, to  K. Ball's sharp reverse isoperimetric inequality \cite{Ball1991},
to the notion of volume ratio \cite{Szarek78, SzarekTomczakJ80}, which is defined as the $n$-th root of the volume of a convex body divided by the volume of its John ellipsoid
and 
which finds applications in functional analysis and Banach space theory \cite{BourgainMilman87, GPSTW, Schuett1982, SzarekTomczakJ80}.
John and L\"owner position  are even relevant in   quantum information theory \cite{ASW10, ASW11, SWZ11}.
Since a  position may be seen as a choice of a special ellipsoid, 
and since an ellipsoid entails a Euclidean structure of the underlying space,  John and  L\"owner ellipsoids provide a way to measure how far a normed space is from Euclidean space \cite{Gluskin1981, john1948}.
For a detailed discussion of the John and the L\"owner ellipsoid and its connections to functional analysis we refer the reader to \cite{AGM2014, BGVV2014Book, SchneiderBook} and the survey \cite{henkJLellip}.
\par
\noindent
F. John proved in \cite{john1948} that  among all ellipsoids contained in  a convex body \( K\in \R^n\), there is a unique ellipsoid of maximal volume, now called  the John ellipsoid of \(K\). The L\"owner ellipsoid of \(K\) is the unique ellipsoid of  minimal volume containing \(K\). 
These two notions are closely related by polarity (see, e.g., \cite{BGVV2014Book, MSW2015b}): 
A $0$-symmetric ellipsoid $\mathcal{E}$  is the ellipsoid  of maximal volume inside $K$ if and only if $\mathcal{E}^\circ$ is the ellipsoid of
minimal volume outside $K^\circ$, where $K^\circ=\{y \in \mathbb {R}^n: \langle y, x\rangle \leq 1 \  \   \text {for all }\   x \in K\}$
is the polar of $K$.
\par
\noindent
Probabilisitic methods have become extremely useful in convex geometry. In this context, log-concave functions arise naturally from the uniform measure on convex bodies.
A function $f(x)$ is   said to be log-concave, if it is of the form $f(x)= \exp(-\psi (x) )$ where  $\psi: \R^n \rightarrow\R \cup \{ \infty \}$ is convex.
Extensive research has been devoted within the last ten years to extend the concepts and inequalities  from convex bodies to the setting of functions. 
In fact,  it was observed early that the Pr\'ekopa-Leindler inequality (see, e.g., \cite{GardnerBook2006,  PisierBook1989}) is the functional analog of the 
Brunn-Minkowski inequality (see, e.g., \cite{GardnerBulletin}) for convex bodies. Much progress has been made since and 
functional   analogs of many other geometric inequalities were established. Among them are the 
functional Blaschke-Santal\'o inequality \cite{AKM2004, Ball1988, FM2007, Leh2009}  and its reverse \cite{FM2008},  
a functional affine isoperimetric inequality for log-concave functions which can be viewed as an inverse log-Sobolev inequality for entropy  \cite{AKSW2012, CFGLSW}
and a theory of valuations, an important concept for convex bodies (e.g., \cite{Haberl:2012,  Ludwig:2010c, Ludwig:2010, Schuster2010, SchusterWannerer}),  is currently being developed in the functional setting, e.g., \cite{ColesantiLudwigMussnig2017, ColesantiLudwigMussnig, Mussnig2017}.
\par
\noindent
It was only recently that the notion of a John (ellipsoid) function of a log-concave function was established by  Alonso-Guti{\'e}rrez,   Merino, Jim{\'e}nez, 
and Villa \cite{Alonso-Gutiérrez2017}.
However, the notion of  a L\"owner ellipsoid function 
for log-concave functions has been missing till now. In this paper we put forward such a notion
and we investigate, among other things,  its relation to the John ellipsoid function of \cite{Alonso-Gutiérrez2017}.
\par
\noindent
Our main result reads as follows.   We denote by  \( \mathcal A\)  the set of all invertible affine transformations and by $\| \cdot\|_2$ denote the Euclidean norm on $\mathbb{R}^n$.  We say that a function is nondegenerate if $\text{int} (\supp f ) \neq \emptyset$.
\par 
\noindent 
{\bf Theorem. } {\em Let \(f:\R^n\to \R\) be a nondegenerate integrable log-concave function. There exists  a unique pair \((A_0,t_0) \in \mathcal A \times \mathbb{R}\) such that \[\int_{\R^n}e^{-\|A_0x\|_2+t_0}dx=\min\left\{\int_{\R^n}e^{-\|Ax\|_2+t}dx:  t\in \R, A\in \mathcal A, e^{-\|Ax\|_2+t}\ge f(x) \right\}.\]
	The uniqueness of \(A_0\) is up to left orthogonal transformations.}
\vskip 3mm
\noindent
We then call \( e^{-\|A_0x\|_2+t_0}\) the L\"{o}wner function of \(f \) and denote it by 
\[ L (f)(x)=e^{-\|A_0 x\|_2+t_0} .  \]
\par
\noindent
The function  \( L(f)\)  is a functional analog of  the L\"owner ellipsoid for log-concave functions. 
Indeed, we show that  if 
\(\1_K(x)\) is the characteristic function of a  convex body \( K\in \R^n\), then the super-level set \( \{ L(\1_K)\ge 1\}\) is exactly the L\"owner ellipsoid of \(K\). If, in addition, $0$ is the center of the L\"owner ellipsoid of \(K\), then it holds by polarity via the Legendre transform that the polar of the L\"{o}wner function is the John function of $(\1_K)^\circ$. 
This is the exact analog of the above quoted polarity relation of John and L\"owner ellipsoids for a convex body and its polar.
While in the case of convex bodies the two notions of John and L\"owner ellipsoid  are always dual to each other, interestingly, in the functional setting this need no longer be the case.
It holds when the functions are even or characteristic functions of convex bodies.

\vskip 2mm
\noindent
The paper is structured as follows. In Section 2 we introduce the basic facts and preliminaries. In Section 3 we define  the notion of L\"{o}wner  function \(L(f)\) for a log-concave function \(f\) and  we prove its existence and uniqueness. In Section 4,   we recover the John function of \cite{Alonso-Gutiérrez2017} and discuss the  duality between these two notions.

\section{ Notation and preliminaries}

Throughout the paper we will use the following notations.
The set of all non-singular affine transformations on \( \R^n\) is written as \( \mathcal A\), 
\[\mathcal A= \{ A=T+a: T\in GL(n), a\in \R^n\}.\]
Let \( S_+\)  be the set of symmetric positive definite matrices.  Then 
\[\mathcal {SA}= \{ A=T+a: T\in S_+, a\in \R^n\}.\]
For  \( b\in \R^n\) fixed, put 
\[\mathcal A(b)=\{A=T+a: T\in GL(n), a\in \R^n, T^{-1}a=b\}.\]
\par
\noindent
Let \( \mathcal{SA}(b)=\mathcal{A}(b)\cap \mathcal{SA}\). Clearly, \( \mathcal A=\cup_{b\in \R^n} \mathcal A(b)\) and \(\mathcal{SA}=\cup_{b\in \R^n}\mathcal{SA}(b)\).
\par
\noindent
The action of an affine transformation \(A:\R^n\to \R^n\) on a function \(f:\R^n\to \R\) is defined as \( A f(x)=f(Ax)\).
\par
\noindent
For \(z\in \R^n\), let \(S_z\) be a translation of a function  by \(z\), that is, for a function \(f\), 
\be
(S_zf)(x)=f(x+z) \label{defoftranslation}
\ee 
\vskip 2mm
\noindent
For $s \in \R$ and a function   $f: \R^n \rightarrow \R$, we denote by 
$$G_f(s)=\{x\in \R^n: f(x)\ge s\}$$
  the super-level sets of  \( f\).

\subsection{Log-concave functions}

A function $f:  \R^n \rightarrow \R$ is said to be log-concave if it is of the form $f (x) = e^{-\psi(x)}$
where $\psi: \R^n \rightarrow \R \cup \{ \infty \}$ is a    convex function.
We always consider in this paper  log-concave functions $f$  that are integrable and such that $f$ is nondegenerate, i.e., the interior of the support of $f$ is non-empty, $\text{int} (\supp f ) \neq \emptyset$.  
This then implies that $0 < \int_{\R^n} f dx < \infty$.
\par
\noindent
We will also need the Legendre transform which we recall now. Let $z \in \mathbb{R}^n$ and let $\psi:\R^n \rightarrow \R \cup \{ \infty \}$ be a    convex function. Then
\[
\mathcal{L}_z \psi(y)=\sup_{x\in \R^n} [ \langle x-z,y-z\rangle -\psi(x)]
\]
is the Legendre transform of \( \psi\) with respect to $z$  \cite{AKM2004,FM2008} .  If  \( f(x)=e^{-\psi(x)} \) is log-concave, then
\be \label{polar}
f^z(y)=\inf_{x\in \text{supp}( f)}\frac{e^{-\langle x-z,y-z\rangle}}{f(x)}=e^{-\mathcal{L}_z \psi(y)}
\ee
is called the dual or polar function of $f$ with respect to $z$. 
In particular, when \( z=0\),  
$$ f^\circ(y)=\inf_{x\in \text{supp}( f)}\frac{e^{-\langle x,y\rangle}}{f(x)} = e^{-\mathcal{L}_0 \psi(y)},$$
where \( \Lc_0 \), 
 also denoted by \(\Lc\) for simplicity,  is the standard Legendre transform. 
 \par
 \noindent
In the next proposition we collect several  well known, easy to verify,  properties of the generalized Legendre transform that we will use throughout the paper.
They can be found in e.g., \cite{AKM2004} and \cite{FM2007}.

\begin{proposition} \label{propoflegendretransf} Let  $\psi$ be a convex function. 
Let $S_z$ be as in (\ref{defoftranslation}). Then
\par
(i) \( \Lc\) and $ \Lc_z$ are  involutions, that is,  \( \Lc (\Lc \psi)=\psi \) and $ \Lc_z(\Lc_z\psi)=\psi$.
\par	
(ii) $\Lc_z=S_{-z}\circ \Lc\circ S_{z}$.
\par	
	(iii) $ \Lc(S_z\psi)(y)=\Lc\psi-\langle z,y\rangle $.
\par
(iv) Legendre transform reverses the oder relation, i.e., if $\psi_1 \leq \psi_2$, then $\Lc \psi_1 \geq \Lc \psi_2$.
\end{proposition}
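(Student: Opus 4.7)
My plan is to dispatch the four items in the order (iv), (iii), (ii), (i), since each can be set up as an easy consequence of the previous ones or of a one-line change of variable. All four are classical facts about the Legendre transform, so I expect the task to be mostly bookkeeping; the only real care point is the regularity hypothesis needed for involution.

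For (iv), I would simply note that $\psi_1 \le \psi_2$ gives $\langle x,y\rangle - \psi_2(x) \le \langle x,y\rangle - \psi_1(x)$ for every $x,y$, and taking $\sup_x$ preserves the inequality, yielding $\mathcal{L}\psi_2 \le \mathcal{L}\psi_1$. For (iii), I would perform the substitution $u = x+z$ inside $\mathcal{L}(S_z\psi)(y) = \sup_x[\langle x,y\rangle - \psi(x+z)]$; the affine correction $-\langle z,y\rangle$ factors out of the supremum and leaves $\mathcal{L}\psi(y) - \langle z,y\rangle$, which is the claim (the displayed formula in (iii) evaluates both sides at $y$). For (ii), the analogous substitution $u = x-z$ inside $\mathcal{L}_z\psi(y) = \sup_x[\langle x-z, y-z\rangle - \psi(x)]$ rewrites it as $\sup_u[\langle u, y-z\rangle - \psi(u+z)] = \mathcal{L}(S_z\psi)(y-z)$, and since $S_{-z}g(y) = g(y-z)$ by the translation convention of the paper, this is exactly $(S_{-z}\circ\mathcal{L}\circ S_z)\psi(y)$. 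Alternatively (ii) can be derived algebraically from (iii) plus the definition of $\mathcal{L}_z$.

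Finally, for (i), I would invoke the Fenchel--Moreau biconjugation theorem: any proper lower semicontinuous convex function $\psi:\mathbb{R}^n\to\mathbb{R}\cup\{+\infty\}$ satisfies $\mathcal{L}(\mathcal{L}\psi)=\psi$. The involution of $\mathcal{L}_z$ is then a formal consequence of (ii) together with the identity $S_z\circ S_{-z}=\mathrm{id}$, since $\mathcal{L}_z\circ\mathcal{L}_z = (S_{-z}\circ\mathcal{L}\circ S_z)\circ(S_{-z}\circ\mathcal{L}\circ S_z) = S_{-z}\circ\mathcal{L}\circ\mathcal{L}\circ S_z = S_{-z}\circ S_z = \mathrm{id}$. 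The main obstacle, and really the only point requiring thought, is checking that the paper's standing hypotheses on $f$ (integrable, log-concave, nondegenerate) force $\psi = -\log f$ to be proper and lower semicontinuous so that Fenchel--Moreau actually applies; this is standard convex-analytic bookkeeping---one passes to the closed convex envelope on the effective domain if needed, which does not alter $f = e^{-\psi}$---and the identities (ii)--(iv) remain purely formal with no such regularity issue.
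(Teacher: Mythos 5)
The paper offers no proof of this proposition: it is stated as a list of ``well known, easy to verify, properties'' with a pointer to the references [AKM2004] and [FM2007], so there is nothing to compare against line by line. Your supplied proof is correct and is the standard one. Items (iii) and (iv) are exactly the one-line substitution and monotonicity arguments; your reduction of (ii) to (iii) via the change of variable $u=x-z$ is right, and checking $S_{-z}\circ S_z=\mathrm{id}$ with the paper's convention $(S_z f)(x)=f(x+z)$ confirms the composition identity. For (i), invoking Fenchel--Moreau for $\mathcal{L}\circ\mathcal{L}=\mathrm{id}$ on proper lsc convex functions and then conjugating by translations via (ii) is the clean route; you are also right that the only genuine hypothesis to verify is that the $\psi$ occurring in the paper are proper and closed, which is implicit in ``nondegenerate integrable log-concave'' (and in the convention that $f=e^{-\psi}$ is upper semicontinuous) but is never spelled out in the paper either. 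In short: the proposal fills a gap the paper leaves to the references, and does so correctly.
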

\vskip 2mm
\noindent
We now list some basic well-known facts on log-concave functions. A log-concave function is continuous on the interior of its support,  e.g., \cite{RockafellarBook1970}. 

We include a proof  of the first fact for the reader's convenience.  More on log-concave functions can be found in 
e.g., \cite{RockafellarBook1970}. 

\begin{fact} If \(f\) is a nondegenerate integrable log-concave function,  then \( G_f(t)\) is convex and compact for \(0<t \leq \|f\|_\infty\).
\end{fact}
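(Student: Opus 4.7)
The plan is to prove the three properties in the order: convexity, closedness, boundedness. Convexity is immediate: writing $f = e^{-\psi}$ with $\psi$ convex, we have $G_f(t) = \{x : \psi(x) \le -\log t\}$, a sublevel set of a convex function. For closedness, we may assume (as is standard for log-concave functions without loss of generality, by passing to the upper-semicontinuous envelope which alters $f$ only on a set of measure zero at the relative boundary of $\supp f$) that $\psi$ is lower semicontinuous; then $G_f(t)$ is closed as a sublevel set of an lsc function.

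The essential content lies in the boundedness. The plan is a contradiction argument using the integrability hypothesis. Assume $G_f(t)$ were unbounded, so we have $x_n \in G_f(t)$ with $\|x_n\|_2 \to \infty$. Since $f$ is nondegenerate, $\text{int}(\supp f) \ne \emptyset$, and $f$ is continuous there, so I can choose a closed ball $B = B(x_0,r) \subset \text{int}(\supp f)$ on which $f \ge s$ for some $s > 0$.

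The key geometric step is to consider the convex hull $C_n = \conv(B \cup \{x_n\})$. For any $y \in B$ and $\lambda \in [0,1]$, log-concavity gives
\[
f\bigl((1-\lambda)y + \lambda x_n\bigr) \;\ge\; f(y)^{1-\lambda}\, f(x_n)^{\lambda} \;\ge\; s^{1-\lambda}\, t^{\lambda} \;\ge\; \min(s,t) \;=:\; c > 0.
\]
Thus $f \ge c$ on all of $C_n$. Now $C_n$ contains a cone with base of $(n-1)$-dimensional volume comparable to $r^{n-1}$ and height comparable to $\|x_n - x_0\|_2$, so $\vol(C_n) \to \infty$. This forces $\int_{\R^n} f\,dx \ge c \cdot \vol(C_n) \to \infty$, contradicting integrability.

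The main obstacle is simply making the cone volume estimate rigorous, but this is routine: one may take an explicit cone with apex $x_n$ and base a fixed slab of $B$ orthogonal to the direction $x_n - x_0$, whose volume grows linearly in $\|x_n\|_2$. The hypotheses that $f$ is both \emph{integrable} and \emph{nondegenerate} are both essential here: nondegeneracy supplies the ball on which $f$ is bounded below, and integrability provides the contradiction.
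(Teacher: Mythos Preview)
Your argument is correct. The convexity and closedness parts match the paper's treatment (both gloss over the lower-semicontinuity point in the same standard way). The boundedness argument, however, is genuinely different from the paper's.

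The paper proceeds as follows: by Chebyshev's inequality, $\vol_n(G_f(t)) \le \|f\|_1/t < \infty$; then it invokes Theorem~7.6 of Rockafellar to assert that for $0<t<\|f\|_\infty$ the set $G_f(t)$ has affine dimension $n$ (the same as $\supp f$), and a full-dimensional convex set of finite volume is bounded. The endpoint $t=\|f\|_\infty$ is handled by the inclusion $G_f(\|f\|_\infty)\subseteq G_f(t)$.

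Your route is more direct and self-contained: you build, via log-concavity, a cone of growing volume on which $f$ is uniformly bounded below, and contradict integrability. This avoids the citation to Rockafellar and the auxiliary fact about full-dimensional convex sets of finite volume, at the cost of a slightly longer geometric computation. Both approaches use nondegeneracy and integrability in essential ways; the paper's is shorter, yours is more elementary.
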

 \begin{proof}
	Let  \( f=e^{-\psi}\).  As $\psi$ is convex and  as $f$ is nondegenerate,  the super-level set 
	\[G_f(t)=\{x:f(x)\ge t\}=\{x: -\psi\ge \log t\}=G_{-\psi}(\log t)\]
	is convex and closed for all $0 < t \leq \|f\|_\infty$. 
As  \( G_f(\|f\|_\infty)\subseteq G_f(t), \forall\ 0<t\le\|f\|_\infty\),	
	it remains to show that \( G_f(t)\) is bounded for  \( 0<t<\|f\|_\infty\). 
	 It follows from  Theorem 7.6 of \cite{RockafellarBook1970} that every super-level set \(G_f(t)\), \( 0<t<\|f\|_\infty\),  has the same affine dimension as the support of \(f\),
	 which has affine dimension \(n\).  
	 Chebyshev inequality  then yields
		\[\vol_n(G_f(t))=\vol_n \left(\left\{ x\in\R^n: f(x)\ge t\right\}\right) \le \frac{\|f\|_1}{t}<\infty.\]
	Since \(G_f(t)\) is a full dimensional convex set with finite volume, it is bounded. 
	Therefore, \( G_f(t)\) is compact for \( 0<t\le\|f\|_\infty\).
\end{proof}
\vskip 2mm
\noindent 
The following fact  is a direct corollary of the functional Blaschke-Santal\'{o} inequality \cite{AKM2004, Ball1988} and the functional reverse Santal\'o inequality \cite{FM2007, KlartagMilman2005}.
\begin{fact}
	Let \( f=e^{-\psi}\) be a nondegenerate, integrable, log-concave function such that $0$ is in the interior of the support of $f$.
	Then $f^\circ$ is again a nondegenerate,  integrable log-concave function and thus 
	  \(0<\int_{\R^n} f^\circ(x)dx<\infty\). Furthermore, 
	  $f^z$ is again a nondegenerate, integrable log-concave function, i.e., \(0<\int_{\R^n} f^z(x)dx<\infty\),  provided that  $z$ is in the interior of $\supp(f)\).
\end{fact}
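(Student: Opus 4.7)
The plan is to split the statement into a reduction from general $z$ to $z=0$, and then check integrability, positivity, and nondegeneracy of $g^\circ$ for a nondegenerate integrable log-concave $g$ with $0\in\text{int}(\supp g)$. For the reduction, Proposition~\ref{propoflegendretransf}(ii) gives $\Lc_z=S_{-z}\circ\Lc\circ S_z$; setting $g:=S_zf$ one finds $f^z(y)=g^\circ(y-z)$, i.e.\ $f^z$ is a translate of $g^\circ$. Because $0\in\text{int}(\supp g)$ iff $z\in\text{int}(\supp f)$, and translation preserves the integral, the support, and nondegeneracy, it suffices to prove the assertion for $g^\circ$. Log-concavity of $g^\circ=e^{-\Lc\psi}$ is immediate, since the Legendre transform of a convex function is convex.

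For the upper bound on $\int g^\circ$ (the Blaschke--Santal\'o half), I would pick $r>0$ with $B(0,r)\subseteq\text{int}(\supp g)$. Log-concave functions are continuous on the interior of their support, so $\psi=-\log g$ is bounded above on $B(0,r)$ by some constant $M$. Restricting the supremum in the Legendre transform to $B(0,r)$ then gives
\[
\Lc\psi(y)\;\ge\;\sup_{x\in B(0,r)}\bigl(\langle x,y\rangle-\psi(x)\bigr)\;\ge\; r\|y\|_2-M,
\]
hence $g^\circ(y)\le e^{M-r\|y\|_2}$, which is integrable on $\R^n$. This gives $\int g^\circ<\infty$; morally this is exactly the finiteness delivered by the functional Blaschke--Santal\'o inequality once the centering condition $0\in\text{int}(\supp g)$ is in force.

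For the lower bound and nondegeneracy (the reverse Santal\'o half), I would evaluate at $0$ using (\ref{polar}):
\[
g^\circ(0)=\inf_{x\in\supp g}\frac{1}{g(x)}=\frac{1}{\|g\|_\infty}.
\]
For nondegenerate integrable log-concave $g$, $\|g\|_\infty<\infty$: if $\inf\psi=-\infty$, convexity would force $\psi\to-\infty$ along a ray, contradicting $\int e^{-\psi}<\infty$. Hence $g^\circ(0)>0$. Log-concavity together with continuity of $g^\circ$ on the interior of its support extends this to $g^\circ>0$ on an open neighbourhood of $0$, yielding both $\text{int}(\supp g^\circ)\ne\emptyset$ and $\int g^\circ>0$. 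The main point to monitor is the convex-analysis interface (continuity of log-concave functions on $\text{int}(\supp)$ and finiteness of $\|g\|_\infty$), both standard consequences of Rockafellar; everything else is a one-line application of the formulas for the Legendre transform collected in Proposition~\ref{propoflegendretransf}.
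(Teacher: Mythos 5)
The paper does not actually prove this Fact; it cites it as a corollary of the functional Blaschke--Santal\'o inequality and the functional reverse Santal\'o inequality. Your argument is a self-contained, elementary proof that avoids both of those deep inequalities, which is a genuinely different route. The reduction to $z=0$ via $f^z(y)=g^\circ(y-z)$ with $g=S_zf$ is correct; so is the lower bound $\Lc\psi(y)\ge r\|y\|_2 -M$ (hence $g^\circ(y)\le e^{M-r\|y\|_2}$), giving integrability; and so is the identity $g^\circ(0)=1/\|g\|_\infty$ together with the argument that $\|g\|_\infty<\infty$.

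However, the final step is circular. After showing $g^\circ(0)>0$, you write that ``log-concavity together with continuity of $g^\circ$ on the interior of its support extends this to $g^\circ>0$ on an open neighbourhood of $0$, yielding both $\text{int}(\supp g^\circ)\ne\emptyset$ and $\int g^\circ>0$.'' But continuity of $g^\circ$ on $\text{int}(\supp g^\circ)$ is usable only if you already know $0\in\text{int}(\supp g^\circ)$ --- which is exactly the nondegeneracy you are trying to establish. A bounded log-concave function that is positive at a single point need not be nondegenerate (consider $\1_{[0,1]\times\{0\}}$ on $\R^2$). What you must show is that $\Lc\psi$ is finite on a full open neighbourhood of $0$, equivalently that $\psi$ has at least linear growth. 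This does follow from your hypotheses, but needs an extra step: by Fact~1 the sub-level sets $\{\psi\le c\}$ are compact for $c>\inf\psi$, and for a convex function compact sub-level sets force linear growth --- if $x_0$ minimizes $\psi$ and $\{\psi\le\psi(x_0)+1\}\subseteq B(x_0,\rho)$, then for $\|x-x_0\|\ge\rho$ the point $x''=x_0+\rho(x-x_0)/\|x-x_0\|$ lies between $x_0$ and $x$, has $\psi(x'')\ge\psi(x_0)+1$, and convexity of $\psi$ along the segment yields $\psi(x)\ge\psi(x_0)+\|x-x_0\|/\rho$. Consequently $\Lc\psi(y)<\infty$ whenever $\|y\|<1/\rho$, so $0\in\text{int}(\supp g^\circ)$, and then your appeal to continuity and log-concavity does finish the proof.
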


\section{The L\"{o}wner  function of a log-concave function}

We now define  the L\"owner function for an integrable, nondegenerate, log-concave function \(f=e^{-\psi }\). 
 \subsection{A minimization problem. Definition of the L\"owner function}

We consider the following {\em minimization problem}
\be
\min_{(A,t)}\int_{\R^n}e^{-\|Ax\|_2+t}dx 
\label{minimalprob}
\ee
subject to 
\be \|Ax\|_2-t\le \psi (x),   \label{subjcond} \  \   \text{ for all  } x \in \mathbb{R}^n, \ee
where  the minimum is taken over all nonsingular affine maps \( A\in \mathcal{A}\) and all \(t\in \R\). 
A change of variables leads to 
\begin{eqnarray*}
\min_{(A,t)}\int_{\R^n}e^{-\|Ax\|_2+t}dx &=&\min_{(A,t)}e^t\int_{\R^n}e^{-\|Ax\|_2}dx
=\min_{(A,t)}\frac{e^t}{|\det A|}\int_{\R^n}e^{-\|y\|_2}dy \\
&=& n! \  \vol(B_2^n) \  \min_{(A,t)}\frac{e^t}{|\det A|}.
\end{eqnarray*}
Geometrically  this  means that we minimize the integral of an {\em ellipsoidal} function $e^{-\|Ax\|_2+t}$ ``outside" $f$ which is exactly what is done when
one considers the L\"{o}wner ellipsoid of a convex body $K$: it minimizes  the volume of the ellipsoids containing $K$.
\par
\noindent
The next theorem is the main result of this section. 

\noindent
\begin{theorem} \label{lownerfcn} 
Let \(f: \R^n\to \R^+\), \(f(x)=e^{-\psi(x)}\) be a nondegenerate,  integrable   log-concave function.
Then there exists a unique  solution modulo $O(n)$ to the minimization problem (\ref{minimalprob}) and (\ref{subjcond}). That is, there exists a  pair \((A_0,t_0)\) satisfying (\ref{subjcond}) such that 
\[\min_{(A,t)}\int_{\R^n}e^{-\|Ax\|_2+t}dx =  n! \  \vol(B_2^n) \  \frac{e^{t_0}}{|\det A_0|}.\]
The number $t_0$ is unique and the affine map $A_0$ 
 is unique  up to left orthogonal transformations.
\par
\noindent
We then call \( e^{-\|A_0x\|_2+t_0}\) the L\"{o}wner function of \(f \) and denote it by 
\[ L(f)(x)=e^{-\|A_0 x\|_2+t_0} .  \]
\end{theorem}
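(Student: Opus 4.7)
The plan is to reduce the minimization to the symmetric case, establish existence by the direct method, and derive uniqueness from joint convexity. First I would exploit the polar decomposition $T=OS$ with $O\in O(n)$ and $S\in S_+$: by orthogonal invariance of $\|\cdot\|_2$ and $|\det|$, each $A = T+a\in \mathcal{A}$ can be replaced by $A' = S + O^{-1}a \in \mathcal{SA}$ with the same constraint and objective. After dividing out $n!\,\vol(B_2^n)$, the problem becomes: minimize $e^t/\det T$ over $(T,a,t)\in S_+\times\R^n\times\R$ subject to $\|Tx+a\|_2 - t \leq \psi(x)$ for all $x$. For fixed $(T,a)$ the smallest admissible $t$ is $t(T,a):=\sup_x(\|Tx+a\|_2 - \psi(x))$, so after taking logarithms this amounts to minimizing the convex function $\Phi(T,a):= t(T,a) - \log\det T$ on $S_+\times\R^n$.

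Existence would follow by a compactness argument on a minimizing sequence. The feasible set is nonempty because integrable log-concave functions satisfy $\psi(x)\geq c\|x\|_2 - d$ for some $c>0,\,d\in\R$, so $(cI,0,d)$ is feasible. Given a minimizing sequence $(T_k,a_k,t_k)$, evaluating the constraint at a maximizer of $f$ bounds $t_k$ from below. Picking $x_1\in\mathrm{int}(\supp f)$ and $r>0$ with $\psi\leq M$ on $B(x_1,r)$, the constraint forces the ellipsoid $\{\|T_kx+a_k\|_2\leq t_k+M\}$ to contain $B(x_1,r)$, so $(t_k+M)^n/\det T_k \geq r^n$; combined with $e^{t_k}/\det T_k$ bounded by the minimizing property, the exponential dominates the polynomial and $t_k$ is bounded above. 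The same ball inclusion controls $\|T_k\|_{\mathrm{op}}$, while $\det T_k$ staying away from zero confines the spectrum of $T_k$ to a compact subset of $S_+$, and $a_k$ is bounded via $\|T_kx_1+a_k\|_2\leq t_k+M$. Extracting a convergent subsequence and passing to the limit by continuity yields a minimizer.

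For uniqueness I would take two minimizers in $\mathcal{SA}$-form and consider the midpoint $(\bar T,\bar a,\bar t)$. Convexity of $\|\cdot\|_2$ keeps it feasible, and Minkowski's determinant inequality followed by AM--GM gives
\[
\frac{e^{\bar t}}{\det\bar T} \;\leq\; \sqrt{\frac{e^{t_1}}{\det T_1}\cdot\frac{e^{t_2}}{\det T_2}} \;=\; m,
\]
the minimum. Equality in Minkowski forces $T_1\propto T_2$ and equality in AM--GM forces $\det T_1=\det T_2$, so $T_1=T_2=T_0$ and then $t_1=t_2=t_0$. For $a_1=a_2$ I would note that both minimize the convex function $a\mapsto t(T_0,a)$; equality in the triangle inequality $\|T_0x+\bar a\|_2 = \tfrac12(\|T_0x+a_1\|_2+\|T_0x+a_2\|_2)$ at a common maximizer $x^*$ forces $T_0x^*+a_1$ and $T_0x^*+a_2$ to be non-negatively proportional, which pins the common contact set to at most one point. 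This contradicts the first-order optimality condition $0\in\partial_a t(T_0,a_0)$, since the subdifferential equals the convex hull of unit vectors $(T_0x+a_0)/\|T_0x+a_0\|_2$ over contact points $x$ and cannot contain the origin from a single point. Undoing the polar-decomposition reduction lifts uniqueness in $\mathcal{SA}$ to uniqueness modulo left orthogonals in $\mathcal{A}$.

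The hardest step is this last strict-convexity argument for $a$: in the classical L\"owner ellipsoid proof the quadratic form $(x-c)^T M(x-c)$ is strictly convex, so uniqueness of the center is automatic. Here $\|Tx+a\|_2$ is only convex along radial directions, so the translation parameter does not benefit from a free strict-convexity bonus; pinning down $a$ requires combining the sharp equality case of the triangle inequality with first-order (subdifferential) optimality on the contact set of the optimizer.
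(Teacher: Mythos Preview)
Your reduction via polar decomposition and your uniqueness argument for $T$ (midpoint, Minkowski determinant inequality, AM--GM) are the same as the paper's. Your existence argument is different and cleaner: the paper parametrizes by the center $b=-T^{-1}a$, proves a separate fixed-$b$ result (via Legendre transform and a function $\xi_f$ on level sets), establishes continuity in $b$, and then confines $b$ to a compact set using illumination bodies. Your direct compactness on a minimizing sequence in $(T,a,t)$ works and avoids all of that machinery.

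The gap is in the uniqueness of $a$. Your argument needs a maximizer $x^*$ of $x\mapsto\|T_0x+a\|_2-\psi(x)$, but for integrable log-concave $f$ the function $\psi$ may have only linear growth, and then the supremum need not be attained. A concrete example is $\psi(x)=\sqrt{1+\|x\|_2^2}$: here $T_0=\mathrm{Id}$, $a_0=0$, $t_0=0$, and $\sup_x(\|x\|_2-\sqrt{1+\|x\|_2^2})=0$ is approached only as $\|x\|_2\to\infty$. Along a maximizing sequence $x_k\to\infty$ the defect in the triangle inequality for $u_k=T_0x_k+a_1$, $v_k=T_0x_k+a_2$ tends to $0$ automatically (since $u_k-v_k=a_1-a_2$ is fixed while $\|u_k\|,\|v_k\|\to\infty$), so no contradiction emerges. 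Your Danskin-type description of $\partial_a t(T_0,a)$ as a convex hull of unit vectors over contact points has the same problem: it presumes the contact set is nonempty.

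The paper sidesteps this by working level by level. With $T_1=T_2=T_0$ and $t_1=t_2=t_0$, for every $0<s<\|f\|_\infty$ the sublevel set $\{\psi\le -\log s\}$ is a \emph{compact} convex body contained in each of the two translated ellipsoids $-b_i+(t_0-\log s)T_0^{-1}B_2^n$. If $b_1\neq b_2$, the intersection of these two ellipsoids misses the boundary of the midpoint ellipsoid, so one can homothetically shrink (replace $T_0^{-1}$ by $\widetilde T_0^{-1}$ with $\det\widetilde T_0>\det T_0$) and still cover the sublevel set, contradicting optimality. This finite-level geometric argument replaces the missing maximizer.

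One minor point: even when a maximizer $x^*$ does exist, your detour through subdifferentials is unnecessary. Since $x^*$ is then also a maximizer for $a_1$ and $a_2$ separately, $\|T_0x^*+a_1\|_2=\|T_0x^*+a_2\|_2=t_0+\psi(x^*)$; combined with non-negative proportionality this gives $T_0x^*+a_1=T_0x^*+a_2$, hence $a_1=a_2$ directly.
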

\vskip 2mm
\noindent
{\bf Examples.}
\par
\noindent
1. The L\"{o}wner function  is an extension of the concept of L\"{o}wner ellipsoid for convex bodies. Indeed, let
\begin{eqnarray*}
		f(x)=\1_{K}(x) = e^{- I_K(x)},  \ \text{ where } \  
		I_K(x) = 
		 \begin{cases} 
			\infty & x\notin K \\
			0 & x\in K 
		\end{cases}\\ 
\end{eqnarray*}
be  the characteristic function of a convex set $K \subset \mathbb{R}^n$. Without  loss of generality we may assume that $0$ is the center of the 
 L\"{o}wner ellipsoid $L(K)$ of $K$.  Then 
\begin{equation}\label{LownerK}
 L (\1_{K})(x) =  e^{-n \left(\|T_{L(K)}^{-1} x\|_2 -1\right)},
 \end{equation}
where $T_{L(K)}$ is the linear map such that $T_{L(K)} B^n_2= L(K)$.
To see this, observe that for  $A \in \mathcal{A}$, $t \in \mathbb{R}$,  the level sets of the map $\varphi(x) =  \|Ax\|_2-t$ are ellipsoids. As $0$ is the center of the 
 L\"{o}wner ellipsoid of $K$, $A=T+a$ is such that $a=0$. 
Thus we get in particular, that the level set
$$
\{x: \varphi(x) = 0\} = \{x: \|Tx\|_2= t\} = t \  T^{-1} B^n_2.
$$
As we require that $\|Ax\|_2-t \leq 0$, for all $x \in K$, the smallest ellipsoid that satisfies this is the  L\"{o}wner ellipsoid $L(K)$ of $K$, i.e., $ t \  T^{-1} B^n_2 = L(K)$. Thus
$$
|\det T | = t^n \  \frac{\vol_n(B^n_2)}{\vol_n(L(K))} 
$$
and $ \min_{(T,t)}\frac{e^t}{|\det T|}$ is achieved for $t_0= n$. This means that $T_0=n T_{L(K)}^{-1}$ and hence $ L (\1_{K})(x) =  e^{-n \left(\|T_{L(K)}^{-1} x\|_2 -1\right)}$.
\par
\noindent
2. It is easy to see that the L\"{o}wner function of the Gaussian $g(x) = e^{-\|x\|^2_2/2}$ is given by 
$$L (g)(x)= e^{-\sqrt{n} \|x\|_2 +\frac{n}{2}}.$$
\par
\noindent
3. More generally, let $f(x) = e^{-\psi(x)}$ be a log-concave function where the convex function $\psi$ depends only on the Euclidean norm of $x$, 
$\psi(x) = \varphi(\|x\|)$. Then  by symmetry $A \in \mathcal{A}$ is of the form $A_0= a \  \text{Id}$. We compute that $a$ and $t_0$ are determined by
$$a = \varphi^\prime\left(\frac{n}{a}\right),   \hskip 2mm t_0= n - \varphi\left(\frac{n}{a}\right)$$
and thus 
$$
L(f)(x) = e^{- a \|x\|_2 +n - \varphi\left(\frac{n}{a}\right)}.
$$

\vskip 4mm
\noindent
We will prove  Theorem \ref{lownerfcn} in several steps.  The first one is to give an equivalent simplified version of   the  minimization  problem via a reduction argument.

\subsection{A reduction argument } \label{Redux}

Let \( f=e^{-\psi}\) be a log-concave function.   Let $A=T+a  \in \mathcal{A}$. 
By the polar decomposition theorem,  \( T\in GL(n)\) can be written as  \( T=O \ R\),  where  \( R\) is a symmetric positive definite matrix and \( O\in O(n)\), the set of orthogonal matrices.
Then
\begin{eqnarray*}
&\min_
{
\|Ax\|_2\le \psi (x)+t
}
 \frac{e^t}{|\det A|}\hskip 2mm  = \hskip 2mm    \min_
{
\|Tx +a\|_2\le \psi (x)+t
} \frac{e^t}{|\det T|}  = \nonumber \\  
&\min_
{
\|O R x + a\|_2\le \psi (x)+t 
} \frac{e^t}{\det R} \hskip 2mm  = \hskip 2mm  \min_
{
\| R x + O^t a\|_2\le \psi (x)+t
} \frac{e^t}{\det R}  \nonumber  \\
&=   \hskip  2mm \min_
{
\| Ax\|_2\le \psi (x)+t
} 
\frac{e^t}{\det A} ,  
\end{eqnarray*}
where $A   \in \mathcal{S A}$. 
 Thus we may assume that $A= T+a$, where  \( T\) is symmetric and positive definite, i.e., $T \in S_+$.  We put  $b=T^{-1} a$ 
 and re-write the last expression further.
\begin{eqnarray} \label{equivalent2}
&& \hskip -5mm \min_
{\| Ax\|_2\le \psi (x)+t } \frac{e^t}{\det A}   \hskip 2mm   = \left( \hskip 2mm  \max_
{
\| Ax\|_2\le \psi (x)+t
 } e^{-t} \  \det A \right)^{-1} \nonumber \\
&&
 = \left(\max_{t }  \hskip 1mm    \max_
{
\|Ax\|_2\le \psi (x)+t
} e^{-t} \  \det A \right)^{-1}  
 =  \left( \max_{t }  \hskip 1mm    \max_
{
\|Tx +a\|_2\le \psi (x)+t
 } e^{-t} \  \det T\right)^{-1} \nonumber \\ && 
 = \left(  \max_{t }  \hskip 1mm    \max_
{
\|T(x +b)\|_2\le \psi (x)+t
 } e^{-t} \  \det T \right)^{-1} 
 =  
\left( \max_{t \in \R }    \hskip 1mm     \max_{b \in \R^n}  \hskip 1mm    \max_
{\|Tx \|_2\le \psi (x-b)+t}
 e^{-t} \  \det T \right)^{-1} \nonumber \\
&& 
= \left(  \max_{b \in \R^n }    \hskip 1mm     \max_{t \in \R}  \hskip 1mm    \max_
{
\|Tx \|_2\le \psi (x-b)+t
} e^{-t} \  \det T\right) ^{-1} \nonumber \\
&&= \min_{b \in \R^n }    \hskip 1mm   \left(       \max_{t \in \R}  \hskip 1mm    \max_
{
\|Tx \|_2\le \psi (x-b)+t
} e^{-t} \  \det T\right) ^{-1},
\end{eqnarray}
where $T \in S_+$.
\vskip 2mm
\noindent
This leads us to  first consider an optimization problem  for  fixed  $b \in \mathbb{R}^n$.
\par
\noindent
\begin{proposition} \label{reducedprob}
Fix \( b\in \R^n\). Let \( f=e^{-\psi}\)  be a nondegenerate,  integrable  log-concave function on \(\R^n\). 
There exists a unique solution,   up to left orthogonal transformations,  to the maximization  problem 
\begin{eqnarray}
   \max_{T \in S_+, t\in \R^n} e^{-t} \  \det T  \hskip 4mm \text{subject to } \hskip 4mm   \|Tx\|_2-t\le \psi (x-b)\hskip 2mm \forall x\in\mathbb R^{n}. 
\label{restrictminimalprob}
\end{eqnarray}
\end{proposition}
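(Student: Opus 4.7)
My plan is to prove existence of a maximizer by a compactness argument based on coercivity of the objective on the feasible set, and then derive uniqueness from the strict concavity of $\log\det$ on $S_+$. Write $\Phi(T,t)=e^{-t}\det T$ and $\mathcal F_b=\{(T,t)\in S_+\times\R:\|Tx\|_2-t\le\psi(x-b)\text{ for all }x\in\R^n\}$.

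First I would check that $\mathcal F_b$ is non-empty and that $\Phi$ is bounded above on $\mathcal F_b$. Non-emptiness follows from the standard linear lower bound $\psi(y)\ge\alpha\|y\|_2-\beta$ that any integrable log-concave $\psi$ satisfies: for any $c\in(0,\alpha)$ and $t$ large enough one has $(cI,t)\in\mathcal F_b$. For the upper bound, integrating the pointwise inequality $e^{-\|Tx\|_2+t}\ge f(x-b)$ over $\R^n$ and changing variables yields
\[
\frac{e^t\,n!\,\vol(B_2^n)}{\det T}=\int_{\R^n}e^{-\|Tx\|_2+t}\,dx\ge\int_{\R^n}f(x-b)\,dx=\|f\|_1,
\]
so $\Phi(T,t)\le n!\,\vol(B_2^n)/\|f\|_1$ for every $(T,t)\in\mathcal F_b$, and the supremum $M$ is finite and strictly positive.

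The core step is coercivity of $\Phi$ on $\mathcal F_b$. Fix $y_0\in\intt(\supp f)$ and $\rho>0$ so small that the closed ball $\bar B(y_0,\rho)$ sits inside $\intt(\supp f)$; by continuity of $\psi$ there is $M_0<\infty$ with $\psi\le M_0$ on $\bar B(y_0,\rho)$. For feasible $(T,t)$ let $\lambda_n(T)$ denote the largest eigenvalue of $T$ and $e_n$ a corresponding unit eigenvector. Evaluating the constraint at the two points $(y_0+b)\pm\rho e_n$ and using the triangle inequality
\[
2\rho\lambda_n(T)=\|T(2\rho e_n)\|_2\le\|T(y_0+b+\rho e_n)\|_2+\|T(y_0+b-\rho e_n)\|_2
\]
forces at least one of the right-hand norms to exceed $\rho\lambda_n(T)$, hence $t\ge\rho\lambda_n(T)-M_0$ and therefore
\[
\Phi(T,t)\le e^{M_0}\,\lambda_n(T)^n\,e^{-\rho\lambda_n(T)}\longrightarrow 0\quad\text{as }\lambda_n(T)\to\infty.
\]
So on any maximizing sequence $\{(T_k,t_k)\}$ with $\Phi(T_k,t_k)\ge M/2$ the operator norms $\lambda_n(T_k)$ are uniformly bounded. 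The constraint at $x=y_0+b$ gives $t_k\ge-M_0$, hence $\det T_k\ge(M/2)e^{-M_0}$, and together with the upper bound on $\lambda_n(T_k)$ this yields a positive lower bound on $\lambda_1(T_k)$; writing $t_k=\log\det T_k-\log\Phi(T_k,t_k)$ bounds $t_k$ above. The maximizing sequence therefore stays in a compact subset of $S_+\times\R$, and $\mathcal F_b$ is closed because each defining inequality is continuous in $(T,t)$, so any subsequential limit is a maximizer.

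For uniqueness, take two maximizers $(T_1,t_1)$ and $(T_2,t_2)$ and form the midpoint $(T_\ast,t_\ast)=\bigl(\tfrac12(T_1+T_2),\tfrac12(t_1+t_2)\bigr)$. The triangle inequality for $\|\cdot\|_2$ shows $(T_\ast,t_\ast)\in\mathcal F_b$, and concavity of $\log\det$ on $S_+$ gives $\log\det T_\ast-t_\ast\ge\log M$; maximality forces equality, and the strict concavity of $\log\det$ on $S_+$ then forces $T_1=T_2$, whence $t_1=t_2$ via $\Phi=M$. So within $S_+\times\R$ the maximizer is strictly unique; the ``up to left orthogonal transformations'' clause in the statement comes from the polar-decomposition reduction that placed us in $S_+$ to begin with. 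I expect the coercivity argument to be the main obstacle, since it is what turns the mere nondegeneracy of $f$ into two-sided control of $\|T\cdot\|_2$ in the direction of the top eigenvector; without that input the top spectrum of $T$ along a maximizing sequence cannot be bounded.
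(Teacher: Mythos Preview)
Your argument is correct, and it is genuinely different from the paper's.

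The paper does not attack the maximization over $(T,t)$ directly. Instead it first takes a Legendre transform of the constraint $\|Tx\|_2-t\le\psi(x-b)$ to rewrite it as $e^{-t}\1_{TB_2^n}\le (S_{-b}f)^\circ$ (after a symmetrization step so that the polar is integrable), and thereby reduces to Proposition~\ref{equivreducedprob}: maximize $e^{-t}\det T$ subject to $e^{-t}\1_{TB_2^n}\le g$ for a log-concave $g$. That problem is then parametrized by the height $s=e^{-t}$ via the auxiliary function $\xi_g(s)=s\cdot\max\{\det T:TB_2^n\subset G_g(s)\}$, which is essentially $s$ times the John-ellipsoid volume of the super-level set $G_g(s)$. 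Existence comes from continuity of $\xi_g$ together with $\xi_g(s)\to 0$ as $s\to 0$; uniqueness comes from the equality case in Minkowski's determinant inequality combined with the arithmetic--geometric mean inequality.

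Your route avoids all of this: you stay with the original constraint, get an a priori upper bound by integrating the pointwise inequality, and then obtain coercivity by evaluating the constraint at two antipodal points $y_0\pm\rho e_n$ inside a small ball where $\psi$ is bounded. This is more elementary and fully self-contained; in particular it needs neither the Legendre transform nor any appeal to the John ellipsoid of level sets. The paper's detour, however, is not wasted effort: the level-set function $\xi_f$ and the identity~(\ref{Gleichung}) are the engine behind Lemma~\ref{continuityI(b)} (continuity of the optimal value in the shift $b$), which in turn drives the existence proof of Theorem~\ref{lownerfcn} and the recovery of the John function in Section~4. Your compactness argument proves Proposition~\ref{reducedprob} cleanly but does not by itself supply that later machinery.

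Your uniqueness argument (midpoint plus strict concavity of $\log\det$ on $S_+$) is essentially the same idea the paper uses, only packaged more directly; your reading of the ``up to left orthogonal transformations'' clause as a remnant of the polar-decomposition reduction to $S_+$ is the correct one.
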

\par
\noindent
\vskip 2mm
\noindent
Before we prove Proposition \ref{reducedprob},  we re-write the constraint condition of  (\ref{restrictminimalprob}).  
\par
\noindent
For any function $h:\mathbb R^{n}\to\mathbb R$ we define its diametral with respect
to the point $w$ as 
$$
h_{\operatorname{dia},w}(-x+2w)=h(x).
$$
For a convex function $\psi:\mathbb R^{n}\to\mathbb R$ we define its symmetral 
$\psi_{\operatorname{sym},w}$ with
respect to the point $w$ as the greatest, convex function that is smaller than
$\max\{\psi,\psi_{\operatorname{dia},w}\}$.
In the same way we define the symmetral 
$f_{\operatorname{sym},w}=e^{-\psi_{\operatorname{sym},w}}$
of a log-concave function $f=e^{-\psi}$.
\par
Since for all $x\in\mathbb R^{n}$
$$
\|T(x+b)\|_2-t=\|T((-x-2b)+b)\|_2-t
$$
the condition
$$
\forall x\in\mathbb R^{n}:\hskip 3mm\|T(x+b)\|_2-t\leq \psi(x)
$$
is equivalent to the condition
$$
\forall x\in\mathbb R^{n}:\hskip 3mm\|T(x+b)\|_2-t\leq \psi_{\operatorname{sym},-b}(x).
$$
Therefore, we can assume that the convex function $\psi$ is symmetric with respect to $-b$.
By Proposition \ref{propoflegendretransf} and Fact 2, 
taking the  Legendre transform on both sides  yields the equivalent condition
\be \mathcal{L}(\|Tx\|_2-t)(y)\ge  \mathcal{L} \left( \psi (x-b)\right) (y) = \mathcal{L}\circ S_{-b} \psi(y). \label{legendrerefinedrestrictsubjcond}
\ee
Observe that 
\begin{eqnarray*}
	\Lc(\|Tx\|_2-t)(y)&=& \sup_x \langle x,y\rangle-\|Tx\|_2+t
	= t+\sup_x \langle x,y\rangle-\|Tx\|_2\\
	&=& t+\sup_z \langle T^{-1}z,y\rangle- \|z\|_2 
	= t+\sup_z \langle z, (T^{-1})^t y\rangle -\|z\|_2\\
	&=& t+\begin{cases} 
		\infty & \|(T^{-1})^t y\|_2>1 \\
		0 & \|(T^{-1})^t y\|_2\le 1 
	\end{cases}\\
	&=& t+\begin{cases} 
		\infty & y\notin T  B_2^n \\
		0 & y\in T B_2^n , 
	\end{cases}\\
\end{eqnarray*}
where from the second to the third equality we have put $ z=Tx$.   It follows that 
\[
e^{-\mathcal{L}(\|Tx\|_2-t)(y)}=e^{-t}\1_{T B_2^n}.
\]
If we set \( f_b=S_{-b}f\),  then 
  (\ref{legendrerefinedrestrictsubjcond}) is equivalent to 
\[
e^{-t}\1_{T^tB_2^n}\le (f_b)^\circ.
\]
Note that  by Fact 2,  \(( f_b)^\circ\) is an integrable log-concave function, provided 
$b \in \text{int } ( \supp f)$.
When $b \notin \text{int } ( \supp f)$, we replace $f$ by $f_{\text{sym,-b}}$  and by the above considerations the minimization problem remains the same.
\par
\noindent
Moreover, shifting by a vector $b$ does not affect the existence and uniqueness of the solution to the optimization problem in Proposition \ref{reducedprob} and  hence proving Proposition \ref{reducedprob} is equivalent to proving the case   $b=0 \in \text{int } ( \supp f)$, possibly replacing $f$ by $f_{\text{sym}}$, i.e., 
we need to show that there is a unique solution modulo $O(n)$  to the maximization problem
\be\label{restriction2}
    \max_{T\in S_+,t\in \R} e^{-t}\det T
	\hskip 4mm \text{subject to } \hskip 4mm
	e^{-t}\1_{TB_2^n}\le f ^\circ. 
		\ee
By Proposition \ref{propoflegendretransf} and the Fact 2,   to prove (\ref{restriction2}),  and hence Proposition \ref{reducedprob},  it is enough to  prove the following Proposition.
\vskip 2mm
\noindent
\begin{proposition}\label{equivreducedprob}
	Let \( f=e^{-\psi}\) be a nondegenerate,  integrable  log-concave function.
	Then
	there exists a  unique solution $(t_0, T_0) \in \mathbb{R} \times S_+$, up to right orthogonal transformations,   to the maximization problem 
	\be
    \max_{T\in S_+,t\in \R} e^{-t}\det T
	\hskip 4mm \text{subject to } \hskip 4mm 
	e^{-t}\1_{TB_2^n}\le f .   \label{restrictsubjcond}
	\ee
\end{proposition}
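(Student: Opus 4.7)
The plan is to recast the problem as maximizing a strictly concave functional over a convex, closed feasible set. Taking logarithms, the objective $e^{-t}\det T$ becomes $g(T,t):=\log\det T-t$, which is affine in $t$ and strictly concave in $T\in S_{+}$. The constraint $e^{-t}\mathds{1}_{TB_{2}^{n}}\le f$ is equivalent to $TB_{2}^{n}\subseteq G_{f}(e^{-t})$, i.e.\ $\psi(Ty)\le t$ for every $y\in B_{2}^{n}$; using convexity of $\psi$ and linearity of $T\mapsto Ty$ one checks that this condition is preserved under convex combinations of $(T,t)$, so the feasible set is convex and closed.

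For existence, take a maximizing sequence $(T_{k},t_{k})$. Feasibility of $T=\varepsilon\operatorname{Id}$ for small $\varepsilon$ together with an appropriate $t$ gives $\sup g>-\infty$. Evaluating the constraint at $y=0$ yields $\psi(0)\le t_{k}$, a finite lower bound since the reduction of Section~\ref{Redux} places us in the case $0\in\mathrm{int}(\supp f)$. For an upper bound on $t_{k}$ I would invoke the standard exponential-tail estimate for integrable log-concave functions, $f(x)\le Ce^{-c\|x\|_{2}}$, which forces $G_{f}(e^{-t_{k}})\subseteq B_{2}^{n}\bigl(0,(t_{k}+\log C)/c\bigr)$ and hence $\det T_{k}\le\vol(G_{f}(e^{-t_{k}}))/\vol(B_{2}^{n})=O(t_{k}^{n})$; together with $g(T_{k},t_{k})\to\sup g$ this forces $t_{k}=O(1)$. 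With $t_{k}$ in a compact interval, Fact~1 traps all $T_{k}B_{2}^{n}$ inside a single compact convex body, bounding $\|T_{k}\|$; boundedness of $g$ near the supremum then forces $\det T_{k}$ away from $0$. Hence the eigenvalues of the positive definite $T_{k}$ stay in a compact subinterval of $(0,\infty)$, so along a subsequence $(T_{k},t_{k})\to(T_{0},t_{0})\in S_{+}\times\R$; closedness of the constraint and continuity of $g$ show this limit is a maximizer.

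For uniqueness, suppose $(T_{1},t_{1})$ and $(T_{2},t_{2})$ both attain the maximum and set $(\bar T,\bar t):=\tfrac12(T_{1}+T_{2},t_{1}+t_{2})$. The midpoint is feasible by the convexity noted above, and strict concavity of $\log\det$ gives $g(\bar T,\bar t)>\tfrac12(g(T_{1},t_{1})+g(T_{2},t_{2}))$ unless $T_{1}=T_{2}$, forcing $T_{1}=T_{2}=:T_{0}$. With $T_{0}$ fixed, $g(T_{0},t)$ is strictly decreasing in $t$, so the optimal $t$ is the smallest feasible value, which is uniquely determined by $T_{0}$; hence $t_{1}=t_{2}$. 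The clause ``up to right orthogonal transformations'' records the invariance $TB_{2}^{n}=(TO)B_{2}^{n}$ for $O\in O(n)$, which acts trivially within $S_{+}$, so $T_{0}$ is unambiguous inside the stated domain.

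The main obstacle I anticipate is the upper bound on $t_{k}$ in the compactness argument: the convexity and concavity ingredients and the lower bound on $t_{k}$ are routine, but ruling out $t_{k}\to\infty$ requires more than Fact~1 alone, namely a polynomial-in-$\log(1/s)$ bound on $\vol(G_{f}(s))$; this is precisely where the integrability hypothesis on the log-concave $f$ enters, through the classical exponential-tail estimate.
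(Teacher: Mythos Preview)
Your argument is correct and takes a genuinely different route from the paper's. The paper reparametrizes by $s=e^{-t}$ and introduces the profile function
\[
\xi_f(s)=s\max\{\det T:\,T\in S_+,\;TB_2^n\subseteq G_f(s)\},
\]
proves a multiplicative (log-concavity type) inequality for $\xi_f$ via the Minkowski determinant inequality (Lemma~\ref{propofxi}), establishes continuity of $\xi_f$ on $(0,\|f\|_\infty]$ (Corollary~\ref{continuityofxi}), and shows $\xi_f(s)\to 0$ as $s\to 0$; existence then follows from continuity on a half-open interval, and uniqueness from the equality cases in Minkowski's inequality and AM--GM. You instead work directly on the pair $(T,t)$: the feasible set $\{(T,t):\psi(Ty)\le t\ \forall y\in B_2^n\}$ is convex and closed, the objective $\log\det T-t$ is concave and strictly so in $T$, and a compactness argument (using the exponential tail bound $f(x)\le Ce^{-c\|x\|_2}$ to cap $t_k$) produces a maximizer. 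Uniqueness of $T$ is immediate from strict concavity of $\log\det$, and uniqueness of $t$ from monotonicity.

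Both arguments ultimately rest on the same convexity of $T\mapsto\psi(Ty)$ and concavity of $\log\det$; the paper's equality analysis in (\ref{minkowskidetineq})--(\ref{agmeanineq}) is exactly your strict-concavity step in disguise. Your approach is shorter and more in the spirit of standard convex optimization; it also makes the role of integrability transparent (it enters only through the tail estimate bounding $t_k$). The paper's level-set approach buys something else: the function $\xi_f$ and its properties are reused later---notably in Lemma~\ref{continuityI(b)} and in recovering the John function of \cite{Alonso-Gutiérrez2017}---so the extra machinery is not wasted. One small point: your reliance on $0\in\operatorname{int}(\operatorname{supp} f)$ is indeed implicit in the statement (otherwise the constraint set is empty for $T\in S_+$), and the paper's proof makes the same tacit assumption.
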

\vskip 4mm
\noindent

\subsection {Proof of Proposition \ref{equivreducedprob}}

To prove Proposition \ref{equivreducedprob},  we introduce,  for \( 0<s\le \|f\|_\infty\),
\[
 \xi_f(s):= s \max_{\{T\in S_+:TB_2^n\subset G_f(s)\}}   \det T.
 \]
 Then we can re-write (\ref{restrictsubjcond}) in terms of $\xi_f$, namely, 
 \begin{equation}\label{Gleichung}
   \max \{ e^{-t} \ \det T : \   T\in S_+,  \  t\in \R, \  e^{-t}\1_{TB_2^n} \  \le \  f\} =\max_{0<s\le \|f\|_\infty}\xi_f(s). 
    \end{equation}
     Indeed, putting $s = e^{-t}$, 
 \begin{eqnarray*}
&&	\hskip -10 mm \max \{ e^{-t} \ \det T : \   T\in S_+,  \  t\in \R, \  e^{-t}\1_{TB_2^n} \  \le \  f\} = \\
	&& \hskip 15mm \max\{ s\  \det T : \  T\in S_+,\   s>0, \  s\1_{TB_2^n} \  \le \  f\} . \\ 
\end{eqnarray*}
Note that \(s\1_{TB_2^n}\le f\Longleftrightarrow TB_2^n\subset G_f(s) \).
Thus we may restrict our attention to the set  
\[
\cup_{s>0} \{T \in S_+: \   TB_2^n\subset G_f(s)\}
 .\]
When \(s>\|f\|_\infty\), \(\{T: \   TB_2^n\subset G_f(s)\}=\emptyset \).
Thus we consider 
\[ \bigcup_{0<s} \{T \in S_+: \  TB_2^n\subset G_f(s)\} =\bigcup_{0<s\le \|f\|_\infty } \{T \in  S_+: \  TB_2^n\subset G_f(s)\}. \]
Therefore, 
\begin{eqnarray}\label{xi-Gleichung}	
	\max\{ s\det T:  T \in S_+, s>0, s \1_{TB_2^n}\le f\} 
	&=&\max_{0<s\le \|f\|_\infty} s \max _{\{T\in S_+: \   TB_2^n\subset G_f(s)  \}} \det T  \nonumber\\
	&= &\max_{0<s\le \|f\|_\infty}  \xi_f(s)
\end{eqnarray}
 \vskip 4mm
 \noindent
 We shall show in  the next lemma that \( \lim_{s\to 0} \xi_f(s)=0\) and in Corollary \ref{continuityofxi} below that the map \(s\to \xi_f(s)\) is continuous.  
We   then can  conclude that the maximizer in Proposition \ref{equivreducedprob} exists.
\vskip 2mm
\noindent
 The next lemma and its proof is similar to Lemma  2.1  in \cite{Alonso-Gutiérrez2017}. We include a proof  for completeness. 
\begin{lemma}\label{propofxi} Let \( f=e^{-\psi}\) be an integrable,  nondegenerate, log-concave function on \(\R^n\).
	For any \( s_1,s_2\in (0,\|f\|_\infty]\) and \( 0\le \lambda\le 1\), 
	\be
	\xi_f(s_1^{1-\lambda}s_2^\lambda)\ge\xi_f(s_1)^{1-\lambda}\xi_f(s_2)^\lambda.
	\label{inequalityofxi}
	\ee
	Moreover, \( \lim_{s\to 0} \xi_f(s)=0\).
\end{lemma}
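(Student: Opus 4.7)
For the log-concavity inequality (\ref{inequalityofxi}), the plan is to pick maximizers and convex-combine them. Let $T_1,T_2 \in S_+$ attain the maxima defining $\xi_f(s_1),\xi_f(s_2)$, so that $T_i B_2^n \subset G_f(s_i)$. Set $s_\lambda := s_1^{1-\lambda} s_2^\lambda$ and $T_\lambda := (1-\lambda)T_1 + \lambda T_2 \in S_+$. For any $x\in B_2^n$ we have $T_\lambda x = (1-\lambda)(T_1 x) + \lambda(T_2 x)$ with $T_1 x\in G_f(s_1)$ and $T_2 x\in G_f(s_2)$, so log-concavity of $f$ gives
\[
f(T_\lambda x) \ge f(T_1 x)^{1-\lambda} f(T_2 x)^{\lambda} \ge s_1^{1-\lambda} s_2^{\lambda} = s_\lambda.
\]
Hence $T_\lambda B_2^n \subset G_f(s_\lambda)$ and $T_\lambda$ is admissible for $\xi_f(s_\lambda)$. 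By Minkowski's determinant inequality ($\det^{1/n}$ is concave on $S_+$) together with AM--GM,
\[
\det(T_\lambda)^{1/n} \ge (1-\lambda)\det(T_1)^{1/n} + \lambda\det(T_2)^{1/n} \ge \det(T_1)^{(1-\lambda)/n} \det(T_2)^{\lambda/n},
\]
so $\det T_\lambda \ge \det(T_1)^{1-\lambda}\det(T_2)^\lambda$. Multiplying by $s_\lambda = s_1^{1-\lambda} s_2^\lambda$ yields the desired inequality.

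For the limit $\lim_{s\to 0}\xi_f(s)=0$, my first step is to turn the extremal determinant into a volume: if $TB_2^n \subset G_f(s)$, then $\det T \cdot \vol(B_2^n) = \vol_n(T B_2^n) \le \vol_n(G_f(s))$, hence
\[
\xi_f(s) \;\le\; \frac{s\,\vol_n(G_f(s))}{\vol(B_2^n)}.
\]
So it suffices to show $s\,\vol_n(G_f(s)) \to 0$ as $s\to 0$. The hard (or at least the nontrivial) ingredient is the standard fact that an integrable log-concave function on $\mathbb{R}^n$ has exponential decay: there exist constants $a,b>0$ such that $f(x) \le a\,e^{-b\|x\|_2}$ for all $x$. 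This follows from log-concavity combined with finiteness of $\int f$.

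Granted this decay estimate, whenever $f(x)\ge s$ (with $s<a$) we have $\|x\|_2 \le b^{-1}\log(a/s)$, so
\[
G_f(s) \subset b^{-1}\log(a/s)\,B_2^n,
\qquad
\vol_n(G_f(s)) \le \vol(B_2^n)\bigl(b^{-1}\log(a/s)\bigr)^n.
\]
Plugging into the bound above gives $\xi_f(s) \le s\,\bigl(b^{-1}\log(a/s)\bigr)^n$, which tends to $0$ as $s\to 0$. The only step that requires care beyond bookkeeping is invoking the exponential-decay lemma for integrable log-concave functions; everything else is convex-analytic manipulation that should go through directly.
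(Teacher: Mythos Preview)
Your proof of the log-concavity inequality is essentially identical to the paper's: both take maximizers $T_1,T_2$, form the convex combination $T_\lambda=(1-\lambda)T_1+\lambda T_2$, check admissibility via log-concavity of $f$, and then apply Minkowski's determinant inequality followed by AM--GM. The only cosmetic difference is that the paper phrases the admissibility step through set inclusions
\[
G_f(s_1^{1-\lambda}s_2^\lambda)\supset(1-\lambda)G_f(s_1)+\lambda G_f(s_2)\supset((1-\lambda)T_1+\lambda T_2)B_2^n,
\]
whereas you evaluate $f$ directly at $T_\lambda x$; the content is the same.

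For the limit $\lim_{s\to 0}\xi_f(s)=0$, the approaches genuinely differ. The paper does not argue directly: it introduces the function $\phi_f(t)=\max\{t\,|\det A|:A\in\mathcal A,\ AB_2^n\subset G_f(t)\}$ from \cite{Alonso-Gutiérrez2017}, observes $\xi_f\le\phi_f$, and cites that reference for $\phi_f(t)\to 0$. Your argument is self-contained: bound $\xi_f(s)\le s\,\vol_n(G_f(s))/\vol(B_2^n)$ and then invoke the exponential-decay lemma for integrable log-concave functions to control the volume. This is correct, and arguably preferable since it avoids an external citation. In fact you can shorten it further and drop the decay lemma entirely: since $s\mapsto\vol_n(G_f(s))$ is nonincreasing,
\[
s\,\vol_n(G_f(s))\le\int_0^s\vol_n(G_f(t))\,dt,
\]
and the right-hand side tends to $0$ as $s\to 0$ because $\int_0^{\|f\|_\infty}\vol_n(G_f(t))\,dt=\int f<\infty$.
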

\par
\noindent
\begin{proof} As the set $\{T\in S_+:TB_2^n\subset G_f(s)\}$ is compact (e.g., in the operator topology), and as the determinant is continuous, there are 
	 \( T_0, T_1\) and \( T_2\)  such that \(\xi_f(s_1^{1-\lambda}s_2^\lambda)=s_1^{1-\lambda}s_2^\lambda\cdot \det T_0,\  \xi_f(s_1)=s_1\cdot \det T_1\) and 
	\( \xi_f(s_2)=s_2\cdot \det T_2\). 
	Then, as $f$ is log-concave,   
		\begin{eqnarray*}
	G_f(s_1^{1-\lambda}s_2^\lambda)&=&\{x: f(x)\ge s_1^{1-\lambda}s_2^\lambda\}
	\supset (1-\lambda)\{x: f(x)\ge s_1\}+\lambda \{x:f(x)\ge s_2\}\\
	&=& (1-\lambda) G_f(s_1)+\lambda G_f(s_2)
	\supset (1-\lambda)T_1B_2^n+\lambda T_2B_2^n\\
	&\supset&((1-\lambda)T_1+\lambda T_2)B_2^n.	
	\end{eqnarray*}
    Hence \( \det T_0 \ge \det[(1-\lambda)T_1+\lambda T_2)]\).    
    Moreover, we have \( \det T_0\ge (\det T_1)^{1-\lambda} (\det T_2)^\lambda\). Indeed, by Minkowski's determinant inequality for positive definite matrices (see, e.g., \cite{RobertsVarbergBook1973}), 
    \begin{eqnarray}
    \det T_0&\ge & \det [(1-\lambda)T_1+\lambda T_2) ]  \nonumber\\
              &\ge & \left( (1-\lambda)(\det T_1)^{1/n}+\lambda (\det T_2)^{1/n} \right) ^n  \label{minkowskidetineq}
              \\
              &\ge & (\det T_1)^{1-\lambda}(\det T_2)^\lambda .\label{agmeanineq}
    \end{eqnarray}
    The last inequality follows from the arithmetic-geometric mean inequality. 
    Therefore, 
    \[s_1^{1-\lambda}s_2^\lambda \det T_0 \ge \left( s_1\det T_1 \right)^{1-\lambda}\left( s_2\det T_2)\right) ^\lambda. \]    
    In \cite{Alonso-Gutiérrez2017}, the authors introduce, for $t >0$,  a function \( \phi_f(t)\), 
    \[ 
    \phi_f(t)=\max_{\{A\in \mathcal{A}:  \  AB_2^n\subset G_f(t)\}}t\cdot | \det A |.
    \]
    They showed that \(\lim_{t\to 0}\phi_f(t)=0\). It is clear that \( \xi_f(s)\le \phi_f(s)\) for all \(s\). Hence \(\lim_{s\to 0}\xi_f(s)=0\).
\end{proof}
\vskip 3mm
\noindent
Next we state a John-type result which is  well known. We include a proof for completeness.  We recall the Hausdorff metric, which for two convex bodies $K$ and $L$ is defined as 
$$
d_H(K,L)
= \min\{\lambda \geq 0: K \subseteq L+\lambda B^n_2;   L \subseteq K+\lambda B^n_2 \} .$$
\begin{lemma} \label{contiofellipsoid}
	Let \( \mathcal{K}^n\) be the set of convex bodies in \( \R^n\), equipped with the Hausdorff metric.
	The  map \[K \to  \max_{\{T\in S_+: \  TB_2^n\subset K\}}\det T \] is continuous in \( K\). 
	Moreover, let \(T_K\) be a maximizer, i.e., 
	\[\det T_K =\max_{\{T\in S_+:  \   TB_2^n\subset K\}}\det T .  \]
	Then $T$ is unique up to an orthogonal transformation.
	\end{lemma}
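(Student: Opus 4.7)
The plan is to prove continuity by a standard upper/lower semicontinuity argument and uniqueness via Minkowski's determinant inequality together with the convexity of $S_+$ and of $K$. Throughout, I would assume $0 \in \mathrm{int}(K)$, since otherwise the feasible set $\{T \in S_+ : T B_2^n \subset K\}$ collapses and the maximum is $0$, in which case continuity and uniqueness are trivial.

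For the upper semicontinuity half, I would take $K_m \to K$ in Hausdorff metric and the corresponding maximizers $T_m = T_{K_m}$. Since $\{K_m\}$ eventually lies in a fixed ball, the operator norms $\|T_m\|$ stay uniformly bounded, so a subsequence converges to some positive semidefinite limit $T^*$. The inclusion $T_m B_2^n \subset K_m$ passes to the Hausdorff limit, giving $T^* B_2^n \subset K$, and hence $\det T^* \leq \det T_K$. This yields $\limsup_m \det T_{K_m} \leq \det T_K$.

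For the lower semicontinuity half, the idea is to construct feasible (not necessarily optimal) ellipsoids in $K_m$ by slightly shrinking $T_K B_2^n$. Fixing $\rho>0$ with $\rho B_2^n \subset K$, we have $\tfrac{\rho}{2} B_2^n \subset K_m$ for large $m$. Setting $d_m = d_H(K,K_m)$ and using support-function inequalities of the form $h_{T_K B_2^n} \leq h_K \leq h_{K_m} + d_m$, one checks that $(1-\epsilon_m) T_K B_2^n \subset K_m$ for $\epsilon_m = \tfrac{2 d_m}{\rho + 2 d_m} \to 0$, producing $\det T_{K_m} \geq (1-\epsilon_m)^n \det T_K$ and hence $\liminf_m \det T_{K_m} \geq \det T_K$. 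I expect this shrinkage step to be the main technical point: since $T_K B_2^n$ touches $\partial K$ at the John contact points, one cannot simply fatten it by a small ball; instead one must exploit that $0$ is interior to $T_K B_2^n$, so that radial contraction by $1-\epsilon$ opens a uniform gap of order $\epsilon$ to $\partial K$.

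For uniqueness, suppose $T_1, T_2 \in S_+$ both realize the maximum and consider $T = \tfrac{1}{2}(T_1 + T_2)$, which still lies in $S_+$. The inclusion $(T_1+T_2) B_2^n \subset T_1 B_2^n + T_2 B_2^n$ together with $K = \tfrac{1}{2}(K+K)$ shows $T B_2^n \subset K$, so $T$ is feasible. Minkowski's determinant inequality then gives $\det T \geq \bigl(\tfrac{1}{2}((\det T_1)^{1/n} + (\det T_2)^{1/n})\bigr)^n = \det T_i$, with equality only when $T_1$ and $T_2$ are proportional; maximality forces equality, and the equal determinants then force the proportionality constant to be $1$. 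Thus $T_1 = T_2$ within $S_+$, and passing from $S_+$ to $GL(n)$ via polar decomposition shows that the general matrix maximizer is unique up to a right orthogonal factor.
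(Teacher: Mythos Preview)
Your argument is correct, but it follows a different route from the paper's. The paper does not argue semicontinuity directly; instead it observes that for $0\in\mathrm{int}(K)$ the ellipsoid $T_KB_2^n$ is origin-symmetric and contained in $K$, hence contained in $\hat K:=K\cap(-K)$, and conversely the John ellipsoid of the symmetric body $\hat K$ is centered at $0$ and contained in $K$. Thus $T_KB_2^n=J(\hat K)$, and both uniqueness and continuity are inherited from the classical John ellipsoid: uniqueness of $J(\hat K)$ gives uniqueness of $T_K$ modulo $O(n)$, and the known continuity of $L\mapsto J(L)$ (cited from Gr\"unbaum) combined with the elementary estimate $d_H(\hat K,\hat L)\le 2\,d_H(K,L)$ yields continuity of $K\mapsto\det T_K$.

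Your approach is more self-contained. The upper-semicontinuity step via compactness of $\{T_m\}$ and passage to the Hausdorff limit is clean; the lower-semicontinuity step with the explicit radial shrinkage factor $\epsilon_m=2d_m/(\rho+2d_m)$ is exactly right and makes the quantitative dependence visible, whereas the paper hides this inside the cited continuity of the John map. Your uniqueness argument via Minkowski's determinant inequality and the feasibility of $\tfrac12(T_1+T_2)$ is also correct and in fact mirrors the technique the paper uses elsewhere (in Lemma~\ref{propofxi} and in the uniqueness part of Theorem~\ref{lownerfcn}), though not in this particular lemma. The trade-off is that the paper's proof is a few lines long at the price of an external reference, while yours is longer but stands on its own.
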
 
\begin{proof} 
	First note that if \( 0\notin int(K)\), then $\{T \in S_+: \  TB_2^n\subset K\} = \emptyset $.\\
	For \(K\) with \( 0\in int(K)\), let \(T_K\) be such that \( \det T_K= \max_{\{T\in S_+: TB_2^n\subset K\}}\det T\) and let \( \hat{K}=K\cap (-K)\). Then 
	\[ T_KB_2^n\subset \hat{K}=K\cap (-K)\subset K.\]
As $K \cap (-K)$ is centrally symmetric, the center of the ellipsoid of maximal volume contained in 	$K \cap (-K)$ is also centered at $0$.
	Therefore the  ellipsoid \( T_KB_2^n\) is the  ellipsoid of largest volume or John ellipsoid \(J(\hat{K})\) contained in \(\hat{K}=K\cap (-K)\).
	It follows that \(T_K\) is unique, modulo $O(n)$, as \(J(\hat{K})\) is unique, e.g.,  \cite{GardnerBook2006}.
	\par
	\noindent	
	Now notice that if   \( K\) and $L$ are such that \( d_H(K,L)<\delta\), then \( d_H(\hat{K}, \hat{L})< 2\delta\). In fact, on the one hand, \[ \hat{L}\subset L\subset K+\delta B_2^n\] 
	\[\hat{L}\subset -L\subset -K+\delta B_2^n,\]
	hence
	\[ \hat{L}\subset K\cap (-K)+2\delta B_2^n =\hat{ K}+2\delta B_2^n.\]
	The other direction follows similarly. 
Let $K \in \mathcal {K}^n$. The map $\hat{K} \rightarrow 	J(\hat{K})$ is continuous, see e.g., \cite{Gruenbaum1963}. Hence, for all $\varepsilon >0$ there exists $\delta$ such that for all $L\in \mathcal {K}^n$ with \( d_H(\hat{K}, \hat{L})< \delta\) we have \( d_H(J(\hat{K}), J(\hat{L}))< \varepsilon\).  It follows that for all $L$ with \( d_H(K,L)<\delta/2\), we get 
\[d_H(T_KB_2^n, T_LB_2^n)<\eps.\]
\end{proof}

\begin{corollary} \label{continuityofxi}
	The map \(s\to  \xi_f(s)\) is continuous in \(s\). 
\end{corollary}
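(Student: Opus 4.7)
The plan is to write $\xi_f(s) = s\cdot \Delta(G_f(s))$, where
\[
\Delta(K) := \max\bigl\{\det T : T \in S_+,\ TB_2^n \subset K\bigr\},
\]
with the convention $\Delta(K)=0$ when the feasible set is empty. Lemma~\ref{contiofellipsoid} supplies Hausdorff-continuity of $\Delta$ on $\mathcal{K}^n$, and the factor $s$ is trivially continuous; so the task reduces to showing that $s \mapsto G_f(s)$ is continuous in the Hausdorff metric from $(0,\|f\|_\infty]$ into the space of compact convex sets, and then patching the right endpoint where $G_f(\|f\|_\infty)$ may drop in dimension.

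On the interior $(0,\|f\|_\infty)$, each $G_f(s_0)$ is a compact convex body with nonempty interior: Fact~1 gives convexity and compactness, and since $s_0 < \|f\|_\infty$ some point $z$ satisfies $f(z) > s_0$, so continuity of $f$ on $\text{int}(\supp f)$ puts a whole neighborhood of $z$ inside $G_f(s_0)$. Hausdorff continuity of $s \mapsto G_f(s)$ at such $s_0$ is then standard: for $s_n \to s_0$, one inclusion follows by extracting a convergent subsequence from any $x_n \in G_f(s_n)$ and using continuity of $f$ to place its limit in $G_f(s_0)$; the other follows by contracting each $y \in G_f(s_0)$ slightly toward the interior point $z$ and invoking log-concavity to stay inside $G_f(s_n)$ for $n$ large. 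Composing with Lemma~\ref{contiofellipsoid} yields continuity of $\xi_f$ on $(0,\|f\|_\infty)$.

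The main obstacle is the endpoint $s_0 = \|f\|_\infty$, where $G_f(\|f\|_\infty)$ may be lower dimensional; in that situation no positive definite $T$ is admissible and $\xi_f(\|f\|_\infty)=0$. I would treat both directions sequentially. For the upper bound, take $s_n \uparrow \|f\|_\infty$ and optimizers $T_n \in S_+$ with $\xi_f(s_n) = s_n\det T_n$ and $T_n B_2^n \subset G_f(s_n) \subset G_f(s_1)$; the last set is compact, so $\{T_n\}$ is uniformly bounded in operator norm and a subsequence converges to some positive semidefinite $T_\infty$. Monotonicity $G_f(s_m) \subset G_f(s_n)$ for $m \geq n$ together with closedness of $G_f(s_n)$ gives $T_\infty B_2^n \subset \bigcap_n G_f(s_n) = G_f(\|f\|_\infty)$, hence $\lim \xi_f(s_n) = \|f\|_\infty\det T_\infty \leq \xi_f(\|f\|_\infty)$ (the right side being zero if $T_\infty$ is degenerate). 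For the matching lower bound, either $\xi_f(\|f\|_\infty)=0$ and there is nothing to prove, or one picks an optimizer $T^\ast$ and notes that $T^\ast B_2^n \subset G_f(\|f\|_\infty) \subset G_f(s_n)$ gives $\xi_f(s_n) \geq s_n\det T^\ast \to \xi_f(\|f\|_\infty)$. Combining the two bounds yields $\lim_{s\uparrow \|f\|_\infty} \xi_f(s) = \xi_f(\|f\|_\infty)$, completing the continuity.
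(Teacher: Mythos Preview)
Your proposal is correct and follows the same route as the paper: factor $\xi_f(s)=s\cdot\Delta(G_f(s))$ and invoke Lemma~\ref{contiofellipsoid} for the Hausdorff-continuity of $K\mapsto\Delta(K)$. The paper's proof is in fact briefer than yours---it simply asserts that $s\mapsto G_f(s)$ is continuous and composes---so your additional work on the Hausdorff continuity of the level-set map and your separate treatment of the endpoint $s=\|f\|_\infty$ (where $G_f$ may degenerate and fall outside $\mathcal{K}^n$) fill in details the paper leaves implicit.
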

\begin{proof}
	Note that the map 
	\[ s \to \max_{\{T\in S_+ : TB_2^n\subset G_f(s)\}} \det T \] 
	is continuous in \(s\) as it is the composition of  the continuous maps \( s\to G_f(s)\) and \( K\to \max_{\{T\in S_+: TB_2^n\subset K\}}\det T\). 
	Hence, 
	\[ s\to  s \cdot \max_{\{T\in S_+: TB_2^n\subset G_f(s)\}} \det T =  \xi_f(s)\] is continuous in \(s\). 
\end{proof} 
\vskip 3mm
\noindent
Now we are ready for the  proof of  Proposition \ref{equivreducedprob} .
\begin{proof}
As  \( \lim_{s\to 0}\xi_f(s)=0\) by Lemma \ref{propofxi}, and as \(\xi_f(s)\) is continuous on \( (0,\|f\|_\infty]\),
\( \xi_f(s)\) attains its maximum for some \( s_0\in (0,\|f\|_\infty]\) and \( T_0\in S_+\). In other words, \( t_0=-\log s_0\) and \( T_0\) solve the maximization problem in Proposition \ref{equivreducedprob}. 
To see the uniqueness modulo $O(n)$, it suffices to show uniqueness in \(s\).  Uniqueness  in \(T\) modulo $O(n)$ then follows from Lemma \ref{contiofellipsoid}.  
\par
\noindent
Suppose there are \( s_1,s_2\) such that \( s_1>s_2\) and  \(\xi_f(s_1)=\xi_f(s_2)\).  Then  it follows from (\ref{inequalityofxi}) and the definition of $\xi_f$  that for \( 0\le \lambda\le 1\),
\[ \xi_f(s_1^{1-\lambda}s_2^\lambda)=\xi_f(s_1)^{1-\lambda}\xi_f(s_2)^\lambda.\]
As in the proof of Lemma \ref{propofxi}, let  \( T_0, T_1\) and \( T_2\) be such that 
\[\xi_f(s_1^{1-\lambda}s_2^\lambda)=s_1^{1-\lambda}s_2^\lambda\cdot  \det T_0 ,\hskip 3mm  \xi_f(s_1)=s_1\cdot \det  T_1,  \hskip 3mm  
 \xi_f(s_2)=s_2\cdot \det T_2 .\]  Then 
\[\det T_0=(\det T_1)^{1-\lambda}(\det T_2)^\lambda.\]
In other words, we have equality in the Minkowski determinant inequality and in the arithmetic-geometric mean inequality,  (\ref{minkowskidetineq}) and (\ref{agmeanineq}), which implies that  \( \det T_1=\det T_2\). Thus
\[ \xi_f(s_1)=s_1\det T_1=s_1\det T_2>s_2\det T_2=\xi_f(s_2), \]
which is contradiction.  
\end{proof}
\vskip 4mm
\noindent

\subsection{Proof of Theorem \ref{lownerfcn}} 
We need several more lemmas. Some of  them are well known. We include a proof for the reader's convenience.

\begin{lemma} \label{ptwcontoflevelset}
	
	Let \(\{f_m\},f\) be nondegenerate integrable log-concave functions such that  \( f_m\to f\) pointwise.  Then the super-level sets converge in Hausdorff metric, that is,
	\[G_{f_m}(k)\to G_f(k) \text{ in Hausdorff,  for  }0< k < \|f\|_\infty  .\]
	
\end{lemma}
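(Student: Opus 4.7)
Set $\psi=-\log f$ and $\psi_m=-\log f_m$, which are convex lower semicontinuous functions with $\psi_m\to\psi$ pointwise, and set $\alpha=-\log k$, so that $K:=G_f(k)=\{\psi\le\alpha\}$ and $K_m:=G_{f_m}(k)=\{\psi_m\le\alpha\}$. By Fact~1 these sets are compact convex; since $k<\|f\|_\infty$, $K$ has nonempty interior, and after a translation we may assume $0\in\operatorname{int}(K)$, so that $\psi(0)<\alpha$ and $0\in K_m$ for $m$ large. Throughout I invoke the classical fact (Theorem~10.8 of \cite{RockafellarBook1970}) that pointwise convergence of convex functions implies uniform convergence on compact subsets of $\operatorname{int}(\operatorname{dom}\psi)$.

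\textbf{Key sub-claim.} \emph{If $x_m\in K_m$ for all large $m$ and $x_m\to x^*$, then $x^*\in K$.} For $\lambda\in(0,1)$ and $m$ large, convexity of $\psi_m$ gives
\[
\psi_m(\lambda x_m)\le\lambda\psi_m(x_m)+(1-\lambda)\psi_m(0)\le\alpha+(1-\lambda).
\]
A convexity-and-lower-semicontinuity argument (exploiting $0\in\operatorname{int}(\operatorname{dom}\psi)$) places $x^*\in\overline{\operatorname{dom}\psi}$, so that for every $\lambda\in(0,1)$ the point $\lambda x^*$ lies in $\operatorname{int}(\operatorname{dom}\psi)$. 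Uniform convergence then yields $\psi_m(\lambda x_m)\to\psi(\lambda x^*)$, hence $\psi(\lambda x^*)\le\alpha+(1-\lambda)$. Letting $\lambda\to 1$ and using lower semicontinuity of $\psi$ produces $\psi(x^*)\le\alpha$, i.e., $x^*\in K$.

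\textbf{Outer inclusion} $K_m\subseteq K+\varepsilon B_2^n$. Argued by contradiction. Suppose a subsequence $x_{m_j}\in K_{m_j}$ satisfies $d(x_{m_j},K)>\varepsilon$. If $\{x_{m_j}\}$ is bounded, a convergent subsequence $x_{m_j}\to x^*$ satisfies $d(x^*,K)\ge\varepsilon$, contradicting the sub-claim. If $\{x_{m_j}\}$ is unbounded, pass to a subsequence along which $|x_{m_j}|\to\infty$ and $v_{m_j}:=x_{m_j}/|x_{m_j}|\to v$. Since $0,x_{m_j}\in K_{m_j}$, convexity of $K_{m_j}$ gives $tv_{m_j}\in K_{m_j}$ for every fixed $t>0$ eventually, and $tv_{m_j}\to tv$. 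The sub-claim then puts $tv\in K$ for all $t>0$, so $K$ contains the ray $\mathbb{R}_+v$, contradicting compactness of $K$ from Fact~1.

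\textbf{Inner inclusion} $K\subseteq K_m+\varepsilon B_2^n$. For $\sigma\in(0,1)$, consider the scaled body $\sigma K:=\{\sigma x:x\in K\}$, a compact subset of $\operatorname{int}(\operatorname{dom}\psi)$ (since $0\in\operatorname{int}(\operatorname{dom}\psi)$ and $K\subseteq\operatorname{dom}\psi$). Convexity of $\psi$ gives $\psi(\sigma x)\le\sigma\psi(x)+(1-\sigma)\psi(0)\le\alpha-(1-\sigma)(\alpha-\psi(0))$ for $x\in K$, so $\psi\le\alpha-\delta$ on $\sigma K$ with $\delta=(1-\sigma)(\alpha-\psi(0))>0$. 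Uniform convergence of $\psi_m\to\psi$ on the compact set $\sigma K$ then yields $\psi_m\le\alpha$ on $\sigma K$ for $m$ large, i.e., $\sigma K\subseteq K_m$. Since $d_H(\sigma K,K)\le(1-\sigma)\operatorname{diam}(K)$, taking $\sigma$ close enough to $1$ gives $K\subseteq\sigma K+\varepsilon B_2^n\subseteq K_m+\varepsilon B_2^n$. The main obstacle is the step in the sub-claim of locating $x^*$ in $\overline{\operatorname{dom}\psi}$ so that the uniform-convergence-on-the-interior theorem applies; this is what couples the convexity bound along $[0,x_m]$ to the lower semicontinuity of the limit $\psi$.
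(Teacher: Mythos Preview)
Your overall strategy differs from the paper's: the paper verifies Schneider's sequential characterization of Hausdorff convergence (limits of convergent sequences $x_{m_j}\in K_{m_j}$ lie in $K$, and every point of $K$ is such a limit) and then constructs approximating points explicitly along a segment to the minimizer, whereas you prove the two $\varepsilon$-inclusions directly. Your inner inclusion via homothetic scaling $\sigma K\subseteq K_m$ is cleaner than the paper's pointwise construction, and your explicit treatment of the unbounded case is something the paper leaves to the cited characterization.

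The gap is exactly where you flag it: the claim that $x^*\in\overline{\operatorname{dom}\psi}$. The hint you give---coupling the convexity bound along $[0,x_m]$ to lower semicontinuity of $\psi$---does not close it: lower semicontinuity says only $\psi(x^*)\le\liminf_{y\to x^*}\psi(y)$, which is vacuous unless you already know $\psi$ is finite on a sequence approaching $x^*$, and that is precisely what is in doubt when $x^*$ might lie outside $\overline{\operatorname{dom}\psi}$. The fix is to thicken the segment to a cone. Your inner-inclusion argument (or direct uniform convergence on a small ball about $0$) already gives $B(0,r)\subseteq K_m$ for all large $m$, hence $\operatorname{conv}\bigl(B(0,r)\cup\{x_m\}\bigr)\subseteq K_m$ by convexity. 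For fixed $\lambda\in(0,1)$, writing $\lambda x^*=(1-\lambda)\cdot\tfrac{\lambda}{1-\lambda}(x^*-x_m)+\lambda\, x_m$ shows $\lambda x^*\in K_m$ as soon as $|x^*-x_m|<(1-\lambda)r/\lambda$, so $f_m(\lambda x^*)\ge k$ eventually. Pointwise convergence at the \emph{fixed} point $\lambda x^*$ then gives $f(\lambda x^*)\ge k$, i.e.\ $\lambda x^*\in K$ for every $\lambda\in(0,1)$, and closedness of $K$ yields $x^*\in K$. This bypasses the $\overline{\operatorname{dom}\psi}$ step and the appeal to Rockafellar at $\lambda x^*$ altogether, and the same cone mechanism handles the unbounded case in your outer inclusion.
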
 

\begin{proof}	
 Since \( f_m,f\) are non-degenerate, integrable log-concave functions,  they are continuous on their support and by Fact 1, \( G_f(k)\) is a convex body for \( 0<k < \|f\|_\infty\) and \(G_{f_m}(k)\) is a convex body for \( 0<k <  \|f_m\|_\infty\) and  all \(m\geq 1\).
 \par 
 \noindent 
We fix $k$. By e.g., Theorem 1.8.8 of \cite{SchneiderBook}, convergence of  $G_{f_m}(k)\to G_f(k)$  in the Hausdorff metric is equivalent to  the following two properties to hold:
\newline
(i) the limit of any convergent subsequence $(x_{m_j})_{j \in \mathbb N}$ with $x_{m_j} \in G_{f_{m_j}}(k)$ for all $j$, belongs to $G_{f}(k)$;
\newline
(ii) each point in $G_{f}(k)$ is the limit of a sequence $(x_{m})_{\in \mathbb N}$ with $x_{m} \in G_{f_m}(k)$ for all $m \in \mathbb N$.
\par
\noindent
We show (i). Let $(x_{m_j})_{j \in \mathbb N}$ be a sequence with $x_{m_j} \in G_{f_{m_j}}(k)$ for all $j$ and let $x=\lim_{j \to \infty} x_{m_j} $. 
Let $D = \overline{\text{co}}[\{x_{m_j} : j \in \mathbb{N} \}]$ be the closed convex hull of $\{x_{m_j} : j \in \mathbb{N} \}$.  Then $D$ is compact  and convex and as $f_{m_j} \to f$ pointwise on $\mathbb R^n$,  $f_{m_j} \to f$ uniformly
on $D$, by e.g., Theorem 10.8 of \cite{RockafellarBook1970}. Therefore, for $j$ large enough,
\begin{equation}\label{claim}
| f_{m_j} (x_{m_j}) - f(x)| \leq | f_{m_j} (x_{m_j}) - f(x_{m_j})| + | f (x_{m_j}) - f(x)|  < 2 \varepsilon.
\end{equation}
The first estimate holds by the uniform convergence and the second by continuity of $f$. Inequality (\ref{claim}) says exactly that  $ f_{m_j} (x_{m_j}) \to f(x)$.
As $ f_{m_j} (x_{m_j}) \geq k$, we thus get that $f(x) \geq k$ and hence $x \in G_{f}(k)$.
\par
\noindent
Now we show (ii).  By definition, for $0< k < \|f\|_\infty$, 
$$
 G_{f}(k) = \{x: f(x) \geq k\} =\{x: \psi(x) \leq - \log k\} = E_\psi(l),
 $$
where we have put $l= -\log k$.  Similarly, we rewrite $G_{f_m}(k)= E_{\psi_m}(l)$ and then  need to show that every $x \in E_{\psi}(l)$  is the limit of a sequence  
$(x_{m})_{\in \mathbb N}$ with $x_{m} \in E_{f_m}(k)$ for all $m$.  We can assume that $\psi(x)=l$. As  $f$ is integrable, there is $x_0$ in $\mathbb{R}^n$ such that $\psi(x_0)= \min_{x \in \mathbb{R}^n} \psi(x)$. 
We assume without loss of generality that $x_0=0$ and consider the $2$-dimensional plane spanned by $x$ and $e_{n+1} = (0, \dots, 1)$. As $k < \|f\|_\infty$, $l > \psi(x_0) = \psi(0)$. 
Let  $0 < 2 \varepsilon ^\frac{1}{2} < \psi(x) - \psi (0)$.  As $f_m \to f$ pointwise, $\psi_m \to \psi $ pointwise and therefore we have for all $m \geq m_0$  that 
\begin{equation*} 
|\psi(x) - \psi_m(x) | < \varepsilon \hskip 5mm \text{and}  \hskip 5mm |\psi(0) - \psi_m(0) | < \varepsilon.
\end{equation*}
Let $L$ be the line determined by $(0,  \psi(0)+ \varepsilon)$ and $(x, \psi_m(x))$ and let 
$$
x_m = \frac{l -(\psi(0) + \varepsilon)}{\psi_m(x) - (\psi(0) + \varepsilon)} \  x, 
$$
that is $x_m$ is such that the value of $L$ at $x_m$ is $l$.
Then
\begin{eqnarray*}
\|x_m -x\|_2 = \|x\|_2 \   \frac{| l - \psi_m(x)|}{|\psi_m(x) - (\psi(0) + \varepsilon)|} \leq \frac{\varepsilon}{|\psi_m(x) - (\psi(0) + \varepsilon)|} \leq \frac{\varepsilon^\frac{1}{2}}{2(1-\varepsilon^\frac{1}{2})}.
\end{eqnarray*}
The last inequality holds as $|\psi_m(x) - (\psi(0) + \varepsilon)| = |\psi_m(x) - \psi(0) - \varepsilon| > 2  \varepsilon^\frac{1}{2} - 2 \varepsilon$. 
By convexity of $\psi_m$ we have for all $y$ in  the line segment $[0,x]$ that $\psi_m(y)
 \leq L(y)$. 
If $ \psi_m(x ) \geq \psi(x) $ for all $m \geq m_0$, then $x_m \in [0,x]$ and thus
$$
\psi_m(x_m) \leq L(x_m)  \leq l,
$$
which means that $x_m \in E_{\psi_m}(l)$ and we are done.
If there exists $m_1 \geq m_0$ such that $\psi_{m_1}(x) < \psi(x) =l$, then $x \in E_{\psi_{m_1}}(l)$ and we take $x_{m_1} =x$.  Thus, for all $m > m_1$, either 
$\psi_{m}(x) \geq  \psi(x)$  and then we put $x_m$ as above or $\psi_{m}(x) < \psi(x)$ and then we put $x_m=x$.
\end{proof}

\vskip 2mm
\noindent
\begin{lemma}\label{convergenceinfnorm}
	Let \(\{f_m\},f\) be nondegenerate integrable log-concave functions such that  \( f_m\to f\) pointwise.  Then	
	\( \|f_m\|_\infty \to \|f\|_\infty\).
	
\end{lemma}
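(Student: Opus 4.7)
My plan is to work with the convex functions $\psi_m = -\log f_m$ and $\psi = -\log f$, for which pointwise convergence $f_m \to f$ is equivalent to $\psi_m \to \psi$. Integrability and nondegeneracy put $\|f\|_\infty$ and each $\|f_m\|_\infty$ in $(0,\infty)$, so $a := -\log\|f\|_\infty = \inf\psi \in \R$ and $a_m := -\log\|f_m\|_\infty = \inf\psi_m \in \R$, and the conclusion is equivalent to $a_m \to a$. For the easy direction $\limsup a_m \leq a$ I would pick $x_0 \in \text{int}(\supp f)$ with $\psi(x_0)$ arbitrarily close to $a$, which is possible because $\psi$ is continuous on the nonempty open convex set $\text{int}(\supp f)$ and $\inf_{\text{int}(\supp f)}\psi = \inf\psi$; then $a_m \leq \psi_m(x_0) \to \psi(x_0)$.

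For the harder direction $\liminf a_m \geq a$, I would first bound near-minimizers. Pick $x_m$ with $\psi_m(x_m) \leq a_m + 1/m$; the easy direction yields $\psi_m(x_m) \leq a + 2$ for $m$ large, so $x_m \in G_{f_m}(\kappa)$ for some fixed $0 < \kappa < \|f\|_\infty$. Lemma~\ref{ptwcontoflevelset} gives $G_{f_m}(\kappa) \to G_f(\kappa)$ in the Hausdorff metric, hence these sets lie eventually in a common compact set and $\{x_m\}$ is bounded. Passing to a subsequence, $x_m \to x^*$ with $x^* \in G_f(\kappa)$, so $\psi(x^*) < \infty$. Now assume for contradiction $\liminf a_m = b < a$ and refine the subsequence so that $\psi_m(x_m) \to b$. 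Fix any $z \in \text{int}(\supp f)$ and, for $\lambda \in (0,1)$, set $z_{m,\lambda} = (1-\lambda)z + \lambda x_m$ and $z_\lambda = (1-\lambda)z + \lambda x^*$. Then $z_\lambda \in \text{int}(\supp f)$, and Theorem 10.8 of \cite{RockafellarBook1970} delivers uniform convergence of $\psi_m$ to $\psi$ on a compact neighborhood of $z_\lambda$, so $\psi_m(z_{m,\lambda}) \to \psi(z_\lambda)$. Combining this with $\psi_m(z) \to \psi(z)$ and $\psi_m(x_m) \to b$, the convexity inequality $\psi_m(z_{m,\lambda}) \leq (1-\lambda)\psi_m(z) + \lambda\psi_m(x_m)$ passes to the limit to give
\[
\psi(z_\lambda) \leq (1-\lambda)\psi(z) + \lambda b.
\]
Since $\lambda \mapsto \psi(z_\lambda)$ is convex and finite on $[0,1]$ it is continuous, and letting $\lambda \to 1^-$ yields $\psi(x^*) \leq b < a$, contradicting $a = \inf\psi$.

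The main obstacle is that the limit $x^*$ of the near-minimizers $x_m$ may lie on the boundary of $\supp f$, where we have no direct uniform convergence of $\psi_m$ to $\psi$. The segment device with interior reference point $z$ circumvents this: it replaces $x^*$ and $x_m$ by interior points $z_\lambda$ and $z_{m,\lambda}$, where Rockafellar's uniform-convergence theorem applies, and only at the very end recovers the bound at $x^*$ via one-sided continuity of $\psi$ along the segment $[z,x^*]$.
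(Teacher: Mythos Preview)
Your argument is essentially correct and follows a route different from the paper's, but there is one small slip at the end. A finite convex function on $[0,1]$ is \emph{not} automatically continuous at the endpoints: convexity gives only upper semicontinuity there (so $\limsup_{\lambda\to1^-}\psi(z_\lambda)\le\psi(x^*)$), and lower semicontinuity can fail (take $h\equiv0$ on $[0,1)$, $h(1)=1$). Hence you cannot conclude $\psi(x^*)\le b$ from $\psi(z_\lambda)\le(1-\lambda)\psi(z)+\lambda b$ by letting $\lambda\to1^-$. The repair is trivial and in fact simpler than what you wrote: just fix $\lambda\in(0,1)$ close enough to $1$ that $(1-\lambda)\psi(z)+\lambda b<a$, which is possible since $b<a$; then $\psi(z_\lambda)<a=\inf\psi$ is already the contradiction. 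You never needed the value $\psi(x^*)$, so the delicate endpoint behaviour is irrelevant.

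For comparison, the paper handles the harder direction $\limsup\|f_m\|_\infty\le\|f\|_\infty$ more directly. It invokes Lemma~\ref{ptwcontoflevelset} at the level $\tfrac12\|f\|_\infty$ to trap $G_{f_m}(\tfrac12\|f\|_\infty)$ inside a fixed compact set $C$ for large $m$, which forces $\sup_{\R^n\setminus C}f_m\le\tfrac12\|f\|_\infty$; on $C$ itself, uniform convergence (again via Theorem~10.8 of \cite{RockafellarBook1970}) gives $f_m\le f+\eps\le\|f\|_\infty+\eps$. Your approach instead localises near-minimisers $x_m$ of $\psi_m$, uses Lemma~\ref{ptwcontoflevelset} only to establish their boundedness, and then transfers the convexity inequality from $\psi_m$ to $\psi$ via the segment device with an interior anchor $z$. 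This is a bit longer but has the pleasant feature that the contradiction is produced at an \emph{interior} point $z_\lambda$, so the argument never actually depends on the value of $\psi$ at the possibly-boundary point $x^*$---which, in view of the endpoint issue above, is just as well.
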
 

\begin{proof}
	As $f$ is integrable and log-concave, there is \(x_0\in \R^n\)  such that \(f(x_0)=\|f\|_\infty \). 
	Thus
	for an arbitrary \(\eps>0\), there exists \(m_1\) such that \[f_m(x_0)\ge f(x_0)-\eps , \]
	whenever \(m>m_1\). So \( \|f_m\|_\infty\ge f_m(x_0)\ge f(x_0)-\eps \) whenever \( m>m_1\). Thus 
	\be \label{polarbdliminf}
	\liminf\|f_m\|_\infty\ge \|f\|_\infty.
	\ee
	On the other hand, fix an arbitrary \(0 < \eps < \frac{1}{4} \|f\|_\infty \). By   log-concavity
	of \( f\), there exists \( \delta>0\) such that 
	\[G_{f}\left(\frac{1}{2}\|f\|_\infty-\eps \right)\subset G_{f}\left(\frac{1}{2}\|f\|_\infty\right)+\delta B_2^n. \]
	By  Lemma \ref{ptwcontoflevelset}, there exists \(m_2\) such that 
	\be
	G_{f_m}\left(\frac{1}{2}\|f\|_\infty \right)\subset G_{f}\left(\frac{1}{2}\|f\|_\infty\right)+\delta B_2^n, 
	\ee
	whenever \(m>m_2\). It follows that 
	\(
	f_m(x)<\frac{1}{2}\|f\|_\infty
	\)
	for all \(x\notin G_{f}\left(\frac{1}{2}\|f\|_\infty\right)+\delta B_2^n \) and whenever \(m>m_2\). In other words, 
	\be \label{polarbdout}
	\sup_{x\notin G_{f}\left(\frac{1}{2}\|f\|_\infty\right)+\delta B_2^n}f_m(x) \leq \frac{1}{2}\|f\|_\infty, 
	\ee
	whenever \(m>m_2\).
	Moreover, since \( f_m(x)\to f(x)\) pointwise on \(G_{f}\left(\frac{1}{2}\|f\|_\infty\right)+\delta B_2^n \) and 
	\(G_{f}\left(\frac{1}{2}\|f\|_\infty\right)+\delta B_2^n\) is a compact set, we have \( f_m\to f\) uniformly on \(G_{f}\left(\frac{1}{2}\|f\|_\infty\right)+\delta B_2^n\), 
	by e.g., Theorem 10.8 of \cite{RockafellarBook1970}.  That is, for the same \(\eps\), there exists \( m_3\) such that 
	\[ f_m(x)\le f(x)+\eps\]
	whenever \(m>m_3\) and for all \( x\in G_{f}\left(\frac{1}{2}\|f\|_\infty\right)+\delta B_2^n\).
	Thus, 
	\be \label{polarbdin}
	\sup_{x\in G_{f}\left(\frac{1}{2}\|f\|_\infty\right)+\delta B_2^n}
	f_m(x)\le \|f\|_\infty+\eps, 
	\ee
	whenever \(m>m_3\). Taking \(m>\max\{m_2,m_3\}\) and combining (\ref{polarbdout}) and (\ref{polarbdin}), one has 
	\[\sup_{x\in \R^n} f_m(x)=\|f_m\|_\infty\le \|f\|_\infty+\eps.\]
	Hence 
	\be \label{polarbdlimsup}
	\limsup\|f_m\|_\infty\le \|f\|_\infty.
	\ee	
	Finally, combining (\ref{polarbdliminf}) and (\ref{polarbdlimsup}), one concludes that \(\lim\|f_m\|_\infty=\|f\|_\infty\).
\end{proof}
\vskip 2mm
\noindent
\begin{lemma} \label{continuityI(b)}
Let \(\{f_m\},f\) be a nondegenerate integrable log-concave functions  and suppose that  \( f_m\to f\) pointwise.  Then 
\[\max_{0<s\le \|f_m\|_\infty}\xi_{f_m}(s)\to \max_{0<s\le \|f\|_\infty}\xi_f(s).\] 
\end{lemma}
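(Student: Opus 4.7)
My plan is to prove the convergence by establishing the $\liminf$ and $\limsup$ inequalities separately, exploiting three tools already in hand: continuity of $\xi_f$ in $s$ (Corollary \ref{continuityofxi}), Hausdorff convergence of super-level sets (Lemma \ref{ptwcontoflevelset}), and convergence of the sup norms (Lemma \ref{convergenceinfnorm}). The philosophy is that for each fixed $s \in (0, \|f\|_\infty)$, the value $\xi_{f_m}(s)$ converges to $\xi_f(s)$ via Hausdorff continuity of the super-level set and continuity of the inscribed-ellipsoid functional from Lemma \ref{contiofellipsoid}, so the only real issue is uniformity near the endpoints $0$ and $\|f\|_\infty$.

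For the $\liminf$ direction, let $s_*$ be a maximizer of $\xi_f$ on $(0,\|f\|_\infty]$. Since $\xi_f$ is continuous and $\lim_{s\to 0}\xi_f(s)=0$, given $\varepsilon>0$ I can pick $s'\in(0,\|f\|_\infty)$ with $\xi_f(s')>\max\xi_f-\varepsilon$. By Lemma \ref{convergenceinfnorm}, $s'<\|f_m\|_\infty$ eventually, so $\xi_{f_m}(s')$ is defined. By Lemma \ref{ptwcontoflevelset}, $G_{f_m}(s')\to G_f(s')$ in the Hausdorff metric, and Lemma \ref{contiofellipsoid} then yields $\xi_{f_m}(s')\to\xi_f(s')$. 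Hence $\liminf_m \max_{s}\xi_{f_m}(s)\geq \xi_f(s')>\max\xi_f-\varepsilon$, and $\varepsilon\to 0$ closes this direction.

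For the $\limsup$ direction, let $s_m$ be a maximizer of $\xi_{f_m}$. Then $s_m\le\|f_m\|_\infty$, which is uniformly bounded by Lemma \ref{convergenceinfnorm}, so along a subsequence $s_m\to s^{*}\in[0,\|f\|_\infty]$. One then wants to show $\xi_{f_m}(s_m)\to\xi_f(s^{*})\le\max\xi_f$, which combined with $\limsup$ along the subsequence yields the desired inequality (applied to every convergent subsequence). The convergence itself follows from a double-limit adaptation of Lemma \ref{ptwcontoflevelset}: any $x\in G_f(s^{*})$ with $f(x)>s^{*}$ has $f_m(x)\ge s_m$ for $m$ large, and conversely a Hausdorff-limit point of $G_{f_m}(s_m)$ lies in $G_f(s^{*})$ by pointwise convergence plus uniform convergence of $f_m$ on compact sets (Theorem 10.8 of \cite{RockafellarBook1970}); together with Lemma \ref{contiofellipsoid} and $s_m\to s^{*}$ this gives $\xi_{f_m}(s_m)=s_m\cdot\det T_m\to s^{*}\cdot\det T_{*}=\xi_f(s^{*})$.

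The main obstacle is ruling out the degenerate case $s^{*}=0$, since $G_f(0)$ is the (possibly unbounded) support and the Hausdorff convergence of super-level sets fails there. To exclude it, I would combine the $\liminf$ inequality (which forces $\xi_{f_m}(s_m)\ge\max\xi_f-\varepsilon>0$ eventually) with a uniform upper bound $\xi_{f_m}(s)\le s\,\mathrm{vol}(G_{f_m}(s))/\mathrm{vol}(B_2^n)\le \|f_m\|_1/\mathrm{vol}(B_2^n)$ via Chebyshev, together with the log-concavity (and hence unimodality) of $\xi_{f_m}$ established in Lemma \ref{propofxi} to propagate the smallness of $\xi_{f_m}(s_0)$ at a single small $s_0$ to all smaller $s$; this forces $s_m$ to stay in a compact subinterval of $(0,\|f\|_\infty+1]$, so $s^{*}>0$ and the earlier convergence argument applies.
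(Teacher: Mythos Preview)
Your approach is correct in spirit and genuinely different from the paper's in the key step. Both arguments establish the $\liminf$ and $\limsup$ inequalities separately, and the $\liminf$ direction is essentially the same in both. The real divergence is in controlling $\xi_{f_m}(s_m)$ when the maximizer $s_m$ is small. The paper does this by a direct integral (layer-cake) computation: since $T_{m,s_m}B_2^n\subset G_{f_m}(s)$ for every $s\le s_m$, one has
\[
s_m\det T_{m,s_m}\;\le\;\frac{1}{\vol_n(B_2^n)}\int_0^{\delta}\vol_n(G_{f_m}(s))\,ds,
\]
and this tail is shown to be small by comparing it to $\int_0^\delta\vol_n(G_f(s))\,ds$ via $\int f_m\to\int f$ and the uniform convergence $\vol_n(G_{f_m}(s))\to\vol_n(G_f(s))$ on $[\delta,\|f\|_\infty-\eps_1]$. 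Your log-concavity argument is more conceptual and avoids these estimates: once you know $\xi_{f_m}(s_0)$ is small and $\xi_{f_m}(s')$ is large for fixed $s_0<s'$ (both from pointwise convergence of level sets plus Lemma~\ref{contiofellipsoid}), concavity of $u\mapsto\log\xi_{f_m}(e^u)$ forces the secant through $(\log s_0,\log\xi_{f_m}(s_0))$ and $(\log s',\log\xi_{f_m}(s'))$ to have positive slope, and then the graph lies below this secant for $u<\log s_0$, giving $\xi_{f_m}(s)\le\xi_{f_m}(s_0)$ for all $s<s_0$. Hence $s_m\ge s_0$ eventually.

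One case, however, is not covered as written: the upper endpoint $s^{*}=\|f\|_\infty$. Your ``compact subinterval of $(0,\|f\|_\infty+1]$'' allows $s^{*}=\|f\|_\infty$, but your double-limit adaptation of Lemma~\ref{ptwcontoflevelset} only works for $0<s^{*}<\|f\|_\infty$: when $s^{*}=\|f\|_\infty$ there is no $x$ with $f(x)>s^{*}$, and $G_f(\|f\|_\infty)$ may have empty interior, so Lemma~\ref{contiofellipsoid} does not directly apply. The paper treats the range $s_m\in(\|f\|_\infty-\eps_1,\|f\|_\infty+\eps_1)$ as a separate case, using $G_{f_m}(s_m)\subset G_{f_m}(\|f\|_\infty-\eps_1)$ together with Hausdorff convergence at level $\|f\|_\infty-\eps_1$ to bound $\det T_{m,s_m}$. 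You should add an analogous argument (or run a symmetric unimodality argument near the top) to close this gap.
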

\vskip 2mm
\noindent
\begin{proof} 
For $m \geq 0$,  and with the convention that $f_0=f$, let \( T_{m,s}\) be such that \[ \det T_{m,s}=\max _{\{T\in S_+: \   TB_2^n\subset G_{f_m}(s)  \}} \det T.\]
By (\ref{Gleichung}) and Proposition \ref{equivreducedprob}, there exists a unique $s_0 = e^{-t_0}$ and a unique, modulo $O(n)$,
$T_{0} \in S_+$  such that 
\begin{eqnarray*}\label{}
\xi_{f}(s_0) &=& \max_{0 < s \leq \|f\|_\infty} \xi_{f}(s) = s_0  \det T_{0}  =  \max_{0 < s \leq \|f\|_\infty} s \  \max _{\{T\in S_+: \   TB_2^n\subset G_{f}(s)  \}}  \det T\nonumber  \\ 
&= & \max_{0 < s \leq \|f\|_\infty} s \det T_{0,s}.
\end{eqnarray*}
The third identity holds by definition of $\xi_f$ and the last identity holds by definition of $T_{0,s}$. 
Thus $\max_{0 < s \leq \|f\|_\infty} s \det T_{0,s} = s_0  \det T_{0} = s_0  \det T_{0, s_0}$. 
Similarly, for all $m \in \mathbb {N}$, there exist  unique $s_m$ and  a unique, modulo $O(n)$,
$T_{m, s_m} \in S_+$  such that 
$$
\xi_{f_m}(s_m)= \max_{0 < s \leq \|f_m\|_\infty} \xi_{f_m}(s) = s_m  \det T_{m, s_m} .
$$
	Since \( f\) is integrable and as  \(\inte(\supp(f))\neq \emptyset\), 
	\[ 0 < \int_{\R^n} f(x) dx=\int_0^{\|f\|_\infty}\vol_n(G_f(s))ds<\infty.\]
Therefore, for  all \(\eps>0\), there exists \(\delta_\varepsilon >0\) such that  for all $0 < \delta < \delta_\varepsilon$, 
 \[ 0 < \int_0^\delta \vol_n(G_f(s))ds<\eps.\]
In particular, for 
$\eps_0\le \min\{\frac{s_0 \det T_{0,s_0}\vol_n(B_2^n)}{2+\|f\|_\infty +\frac{10 \int_{\mathbb{R}^n} f (s) ds}{9 \|f\|_\infty}}, \frac{\|f\|_\infty}{2} \}$ 
there is  $\delta_{\eps_0}$ such that for all 
\( 0<\delta < \min\{ s_0,   \delta_{\varepsilon_0}, \|f\|_2\}\),
	 \[0 < \int_0^\delta \vol_n(G_f(s))ds<\eps_0.\] 
By Lemma \ref{contiofellipsoid}, the map
$$
G_f(s) \to  \max_{\{T\in S_+: \  TB_2^n\subset G_f(s)\}}\det T = \det T_{0,s}
$$
is continuous and  the map 
$$
G_f(s) \to   \vol_n(G_{f}(s))
$$
is also continuous.
Thus, for $0< \eps_1  \leq \min\{\eps_0, \frac{\|f\|_\infty}{10}\}$ given,  there exists $\eta_1= \eta (\eps_1,s)$ such that for all $\eta \leq \eta_1$,
\begin{equation}\label{Gleichung2}
\left| \det T_{0,s} -  \det T_{m,s}\right| < \eps_1,  
\end{equation}
and 
\begin{equation}\label{Gleichung3}
\left|  \vol_n(G_{f_m}(s))  -     \vol_n(G_{f}(s))  \right| < \eps_1,
\end{equation}
whenever $d_H(G_f(s), G_{f_m}(s)) < \eta$.
\par
\noindent
We fix  \( 0<\delta < \min\{ s_0,   \delta_{\varepsilon_0}, \|f\|_\infty - \eps_1\}\).  As  \( f_m\to f\) pointwise, we get, 
similarly  to the proof of Lemma \ref{ptwcontoflevelset}, that for 
all $0 < \alpha $ with $\delta  < \|f\|_\infty - \alpha$,  
 \[ G_{f_m}(s)\to G_f(s)\] in Hausdorff distance,  uniformly for all $s$ with \( \delta \le s\le \|f\|_\infty - \alpha\).	
Thus, in particular for all $s$ with \( \delta \le s\le \|f\|_\infty -\eps_1\), for 	
	\( 0 < \eta < \eta_1\),  there is \(m_1\) such that for all \(m\geq m_1\),  
	\be d_H\left( G_{f_m}(s),G_f(s)\right)<\eta.
	\label{volleveldiff}\ee
By (\ref{Gleichung2}) and (\ref{Gleichung3}) we therefore get that uniformly for all $s$ with \( \delta \le s\le \|f\|_\infty - \eps_1\) and for all \(m\geq m_1\), 
\begin{equation}\label{GL4}
\left| \det T_{0,s} -  \det T_{m,s}\right| < \eps_1,  
\end{equation}
and 
\begin{equation}\label{GL5}
\left|  \vol_n(G_{f_m}(s))  -     \vol_n(G_{f}(s))  \right| < \eps_1.
\end{equation}
\par
\noindent
By Lemma \ref{convergenceinfnorm},  \( f_m\to f\) pointwise implies that $\|f_m\|_\infty \to \|f\|_\infty$, i.e., there is $m_2$ such that 
\begin{equation}\label{fm}
 \|f\|_\infty - \eps_1 < \|f_m\|_\infty  < \|f\|_\infty + \eps_1
\end{equation}  for all $m \geq m_2$.
In addition, by Lemma 3.2 of \cite{AKM2004}, $\int_{\R^n} f_m \ dx \to \int_{\R^n} f\ dx$, i.e. there is $m_3$ such that for all $m \geq m_3$,
\begin{equation}\label{Integral}
\left| \int_{\R^n} f_m(x) \ dx  -  \int_{\R^n} f(x)\ dx \right| < \eps_1.
\end{equation}
\par
\noindent
Let $m_0= \max\{m_1, m_2, m_3\}$. 
Then, on the one hand, it follows with (\ref{GL4}) that  for  all \( m\geq m_0\), all \( \delta <  \min\{ s_0,   \delta_{\varepsilon_0}, \|f\|_2\}\),  all  $s_m$ such that  \( \delta\le s_m\le  \|f\|_\infty - \eps_1\), 
	\begin{eqnarray} \label{limsup1}
	\xi_f(s_0)=s_0\det T_{0,s_0}&\ge& s_m\det T_{0,s_m} 
	\ge s_m(\det T_{m,s_m}-  \eps_1) \nonumber \\
	& \geq&  s_m\det T_{m,s_m} -   \eps_1 (\|f\|_\infty - \alpha) \nonumber \\
	&\geq&\xi_{f_m}(s_m) -   \eps_1 \|f\|_\infty.
	\end{eqnarray}
\par
\noindent
Furthermore, for \( m\geq m_0\) and \( s_m<\delta\), 
	\begin{eqnarray} \label{limsup2}
	 \limsup_{m\to \infty, s_m<\delta} \xi_{f_m}(s_m)&=&\limsup_{m\to \infty, s_m<\delta}s_m\det T_{m,s_m}\le  \frac{ \eps_0  \left(2+ \|f\|_\infty  + \frac{10 \int_{\mathbb{R}^n}  f(s) ds}{9 \|f\|_\infty} \right)}{\vol_n(B^n_2)}  \nonumber\\
	 &\leq& s_0\det T_{0,s_0}=\xi_f(s_0). 
	 \end{eqnarray}
	The last inequality holds by assumption on $\eps_0$. 
We now verify the second last inequality. We have for all $s \leq s_m$ that $G_{f_m}(s_m) \subseteq G_{f_m}(s)$ and therefore  by definition of $T_{m,s_m}$, 
$$
\vol_n(G_{f_m}(s)) \geq \vol_n(G_{f_m}(s_m)) \geq \det T_{m,s_m}  \vol_n(B^n_2) .
$$
Thus, as $s_m < \delta$ and also using (\ref{fm}), 
\begin{eqnarray*}
s_m\det T_{m,s_m} &\le& \frac{1}{ \vol_n(B^n_2)}  \  	\int_0 ^{s_m}  \vol_n(G_{f_m}(s))  \ ds  \leq \frac{1}{ \vol_n(B^n_2)}  \  	\int_0 ^{\delta}  \vol_n(G_{f_m}(s)) \  ds  \\
&=& \frac{1}{ \vol_n(B^n_2)}  \   \left( 	\int_0 ^{ \|f\|_\infty - \eps_1}  \vol_n(G_{f_m}(s))  ds - 	\int_\delta ^{ \|f\|_\infty- \eps_1}  \vol_n(G_{f_m}(s))  ds \right) \\
&\leq& \frac{1}{ \vol_n(B^n_2)}  \   \Bigg( 	\int_0 ^{ \|f_m\|_\infty}  \vol_n(G_{f_m}(s)) \  ds -  	\int_\delta ^{ \|f\|_\infty - \eps_1}  \vol_n(G_{f_m}(s)) \  ds \Bigg) \\
 &=&  \frac{1}{ \vol_n(B^n_2)}  \   \Bigg( \int f_m \ dx  -	\int_\delta ^{ \|f\|_\infty - \eps_1}  \vol_n(G_{f_m}(s)) \  ds \Bigg)\\
&\leq&  \frac{1}{ \vol_n(B^n_2)}  \   \Bigg( \int f\ dx  + \eps_1 -	\int_\delta ^{ \|f\|_\infty - \eps_1}  \vol_n(G_{f_m}(s)) \  ds \Bigg)\\
&=&  \frac{1}{ \vol_n(B^n_2)}  \   \Bigg( \int_0 ^{ \|f\|_\infty}  \vol_n(G_{f}(s)) \  ds  + \eps_1 -	\int_\delta ^{ \|f\|_\infty - \eps_1}  \vol_n(G_{f_m}(s)) \  ds \Bigg).
\end{eqnarray*}	
The last inequality follows by (\ref{Integral}). Now we use (\ref{GL5})
and get that 
	\begin{eqnarray*}
		s_m\det T_{m,s_m} &\le& \frac{1}{ \vol_n(B^n_2)} \Bigg( \int_0 ^{ \|f\|_\infty}  \vol_n(G_{f}(s)) \  ds  + \eps_1  -	\int_\delta ^{ \|f\|_\infty -\eps_1}  \vol_n(G_{f}(s)) \  ds \\
		& +& \eps_1(\|f\|_\infty- \eps_1-\delta) \Bigg) \\
		&\le & \frac{1}{ \vol_n(B^n_2)} \Bigg( \int_0 ^{ \delta}  \vol_n(G_{f}(s)) \  ds  + \int_{\|f\|_\infty -\eps_1} ^{\|f\|_\infty } \vol_n(G_{f}(s)) \  ds \\
		&+& \eps_1 (1+\|f\|_\infty - \delta - \eps_1)  \Bigg) \\
		& \leq& \frac{1}{ \vol_n(B^n_2)} \Bigg( \eps_0 + \eps_1 (1+\|f\|_\infty - \delta - \eps_1)  + \eps_1 \ \vol_n(G_{f}\left(\|f\|_\infty -\eps_1)\right) \Bigg) \\
	& \leq& \frac{1}{ \vol_n(B^n_2)} \Bigg( \eps_0 + \eps_1 \left(1+\|f\|_\infty \right)  + \eps_1  \frac{10 \ \int_{\mathbb{R}^n} f(s) ds}{9 \ \|f\|_\infty} \Bigg) \\
	& =& \frac{1}{ \vol_n(B^n_2)} \Bigg( \eps_0 + \eps_1 \left(1+\|f\|_\infty  + \frac{10 \ \int_{\mathbb{R}^n} f(s) ds}{9 \ \|f\|_\infty} \right)   \Bigg)\\
	& \leq & \frac{\eps_0}{ \vol_n(B^n_2)} \left( 2 + \|f\|_\infty  + \frac{10 \ \int_{\mathbb{R}^n} f(s)  ds}{9 \ \|f\|_\infty} \right).
	\end{eqnarray*}	
	The last inequality follows by choice of $\eps_1$. The second last inequality above follows as for all $\|f\|_\infty - \eps_1 \leq s \leq \|f\|_\infty$, we have that $ \vol_n(G_{f}(s)) \leq  \vol_n(G_{f}(\|f\|_\infty -\eps_1))$ and as 
	$$ 
	\vol_n(G_{f}(\|f\|_\infty -\eps_1) \leq  \frac{\int_{\mathbb{R}^n} f(s) ds}{\|f\|_\infty -\eps_1}  \leq  \frac{10 \ \int_{\mathbb{R}^n} f(s) ds}{9 \ \|f\|_\infty }, 
	$$
	 by choice of $\eps_1$. Now we use how $\eps_0$ was chosen and get that  for all  $s_m < \delta$, 
\begin{eqnarray*}
		s_m\det T_{m,s_m} &\le&	  s_0\det T_{0,s_0}=\xi_f(s_0). 
\end{eqnarray*}	
\par
\noindent
It remains to check when $ \|f\|_\infty - \eps_1 \leq s_m \leq  \|f\|_\infty +\eps_1$.
\begin{eqnarray*}
\xi_{f_m}(s_m) &=& s_m \det T_{m, s_m}     \\
                      &\le & (\|f\|_\infty+\eps_1)(\det T_{m,\|f\|_\infty-\eps_1})\\
                      &\le & (\|f\|_\infty+\eps_1)(\det T_{0,\|f\|_\infty-\eps_1}+\eps_1)\\
                      &=& (\|f\|_\infty-\eps_1+2\eps_1)(\det T_{0,\|f\|_\infty-\eps_1}+\eps_1)\\
                      &=& (\|f\|_\infty-\eps_1)\det T_{0,\|f\|_\infty-\eps_1}+2\eps_1\det T_{0,\|f\|_\infty-\eps_1}+\eps_1(\|f\|_\infty-\eps_1)+2\eps_1 ^2\\
                      &\le &  \xi_f(s_0) +2\eps_1\det T_{0,\|f\|_\infty-\eps_1}+\eps_1 \|f\|_\infty+\eps_1^2\\
                      &\le & \xi_f(s_0) + 2\eps_0  \left(\det T_{0,\frac{\|f\|_\infty}{2}} + \frac{3}{4} \|f\|_\infty\right).
\end{eqnarray*}
In the first inequality, \( s_m< \|f\|_\infty+\eps_1\) by assumption, and  \(\det T_{m, s_m}\le  \det T_{m,\|f\|_\infty-\eps_1}\) since \( G_{f_m}(s_m)\subset G_{f_m}(\|f\|_\infty-\eps_1)\). In the second inequality, we apply (\ref{GL4}). In the last inequality, we use \( \eps_1<\eps_0\) as assumed, and  we  also use the assumption on \( \eps_0\). 
Therefore, we have for all \( m\geq m_0\) and \( \|f\|_\infty-\eps_1<s_m<\|f\|_\infty+\eps_1\),
\begin{equation}  \label{limsupA3}
\xi_f(s_0)\ge \xi_{f_m}(s_m) -2\eps_0 \left(\det T_{0,\frac{\|f\|_\infty}{2}} + \frac{3}{4} \|f\|_\infty\right)
\end{equation}
\par
\noindent	
	It now follows  from (\ref{limsup1}),  (\ref{limsup2})  and  (\ref{limsupA3}) that   
	\begin{equation}\label{limsup3}
	 \xi_f(s_0)\ge \limsup_m \xi_{f_m}(s_m).	 
	 \end{equation}
\par
\noindent	
    On the other hand, as $\delta \leq s_0$,  for $\eps_1$ given,  it follows from (\ref{GL4})  that for all \(m \geq m_0\), 
    	\[\det T_{0,s_0} \le \det T_{m,s_0} +  \eps_1 .\]
	Therefore, for all \( m \geq m_0\),
\begin{eqnarray*}
	s_0 \det T_{0,s_0} \le s_0 \det T_{m,s_0}  + s_0  \eps_1 \le s_m\det T_{m,s_m} + s_0  \eps_1.
\end{eqnarray*}		
	The last inequality holds as  \( s_m\det T_{m,s_m}=\max_{\{T\in S_+: TB_2^n\subset G_{f_m}(s)\}} s\det T\). 	
	Consequently, for all $m \geq m_0$, 
		\[\xi_f(s_0) =  s_0\det T_{0,s_0}\le s_m\det T_{m,s_m} + s_0  \eps_1 = \xi_{f_m}(s_m) + s_0  \eps_1, \]
		and hence 
\begin{equation}\label{limsup4}		
		\xi_f(s_0)\le \liminf_m \xi_{f_m}(s_m).
\end{equation}			
\par
\noindent		
Altogether, by (\ref{limsup3}) and (\ref{limsup4}), 
	\[\limsup_m\xi_{f_m}(s_m)\le \xi_f(s_0)\le \liminf_m\xi_{f_m}(s_m),\] 
	and thus 
	\begin{equation}\label{Limes}
	 \lim_m\xi_{f_m}(s_m) = \xi_f(s_0).
	 \end{equation}
	   By (\ref{xi-Gleichung}), this is equivalent to 		
		\[\lim_m \max_{0<s\le \|f_m\|_\infty}\xi_{f_m}(s) =  \max_{0<s\le \|f\|_\infty}\xi_f(s).\]
	
\end{proof}
\vskip 3mm
\noindent
In fact, (\ref{Limes}) together with  (\ref{Gleichung}) says that if \(\{f_m\},f\) are integrable,  log-concave functions and if  \( f_m\to f\) pointwise,  then 
\begin{eqnarray*}
 && \hskip -15mm \max \{ e^{-t} \ \det T : \   T\in S_+,  \  t\in \R, \  e^{-t}\1_{TB_2^n} \  \le \  f_m\}  \to \\
 && \hskip 20mm \max \{ e^{-t} \ \det T : \   T\in S_+,  \  t\in \R, \  e^{-t}\1_{TB_2^n} \  \le \  f\} .
\end{eqnarray*}
\vskip 2mm
\noindent
Thus we immediately get the following corollary.
\vskip 2mm
\noindent
\begin{corollary} \label{Korollar}
Let 	\(\{b_m\},b\) in $\mathbb{R}^n$  be such that \(b_m \to b\). Then
\begin{eqnarray*}
 && \hskip -15mm \max \{ e^{-t} \ \det T : \   T\in S_+,  \  t\in \R, \  \|Tx\|_2 -t \leq \psi(x-b_m)\}  \to \\
 && \hskip 20mm \max \{ e^{-t} \ \det T : \   T\in S_+,  \  t\in \R, \     \|Tx\|_2 -t \leq \psi(x-b)\} .
\end{eqnarray*}
\end{corollary}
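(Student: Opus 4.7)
The plan is to reduce Corollary \ref{Korollar} to the displayed statement immediately preceding it, which asserts that pointwise convergence of nondegenerate integrable log-concave functions $f_m\to f$ implies convergence of the maxima taken under the constraint $e^{-t}\1_{TB_2^n}\le f_m$. Thus the task is to convert the constraint $\|Tx\|_2-t\le\psi(x-b_m)$ into an equivalent constraint of the form $e^{-t}\1_{TB_2^n}\le h_{b_m}$ for a log-concave function $h_{b_m}$ that varies pointwise continuously with $b_m$, and then invoke the preceding statement.

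For each $b\in\R^n$, I would carry out the Legendre-transform reduction of subsection \ref{Redux} (equations (\ref{legendrerefinedrestrictsubjcond})--(\ref{restriction2})): after (if necessary) replacing $\psi$ by its symmetral $\psi_{\operatorname{sym},-b}$ about $-b$, which as noted in the text leaves the feasible set unchanged, one takes the Legendre transform of both sides of $\|Tx\|_2-t\le\psi(x-b)$. The Legendre transform of $\|Tx\|_2-t$ is $t+I_{TB_2^n}$, yielding $e^{-t}\1_{TB_2^n}$ after exponentiation, and by Proposition \ref{propoflegendretransf}(iii) one has $\mathcal{L}(S_{-b}\psi)(y)=\mathcal{L}\psi(y)+\langle b,y\rangle$. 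The constraint therefore becomes $e^{-t}\1_{TB_2^n}(y)\le h_b(y)$ for all $y$, where $h_b(y):=(S_{-b}f)^\circ(y)=e^{-\langle b,y\rangle}f^\circ(y)$. Consequently the LHS of the Corollary at $b_m$ equals $\max\{e^{-t}\det T:T\in S_+,\ t\in\R,\ e^{-t}\1_{TB_2^n}\le h_{b_m}\}$, and analogously for $b$.

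I would next check that $h_{b_m}$ and $h_b$ are nondegenerate integrable log-concave functions: log-concavity is immediate, since $h_b$ is the product of the log-affine $e^{-\langle b,\cdot\rangle}$ and the log-concave $f^\circ$, while integrability and non-degeneracy follow from Fact 2 applied to the translate $S_{-b}f$ together with the symmetrization step of subsection \ref{Redux}. Pointwise convergence $h_{b_m}\to h_b$ is then immediate: for each $y\in\R^n$, $e^{-\langle b_m,y\rangle}\to e^{-\langle b,y\rangle}$ as $b_m\to b$, and $f^\circ(y)$ is independent of $m$. Applying the displayed statement preceding the Corollary to the sequence $\{h_{b_m}\}$ with pointwise limit $h_b$ gives convergence of the associated maxima, which by the equivalence established in the first step is exactly the statement of the Corollary.

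The main obstacle is the first step, in particular the boundary case when $-b$ lies on $\partial\supp f$: one must invoke the symmetrization of subsection \ref{Redux} carefully so that the resulting $h_b$ is genuinely nondegenerate and integrable, which is what is needed to legitimately apply Fact 2 and the preceding displayed statement. Once this technical point is settled, the remaining steps are formal bookkeeping.
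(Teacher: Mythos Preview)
Your approach differs from the paper's written proof. Although the remark just before the Corollary does suggest it follows ``immediately'' from the preceding displayed statement (which is the route you take), the proof the paper actually gives does not pass through Legendre duality at all. Instead it argues directly via the triangle inequality: rewriting the constraint as $\|T(y+b_m)\|_2-t\le\psi(y)$, the estimate $\|T(y+b)\|_2\le\|T(y+b_m)\|_2+\|T(b-b_m)\|_2$ shows that $(T,t)\in B_m$ forces $(T,\,t+\|T(b-b_m)\|_2)\in B$, so every value $e^{-t}\det T$ for the $b_m$-problem is at most $e^{\|T(b-b_m)\|_2}$ times the maximum for the $b$-problem; swapping the roles of $b$ and $b_m$ gives the matching lower bound. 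This elementary route works uniformly for every $b\in\R^n$, with no case distinction on the location of $-b$ relative to $\supp f$.

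Your polar-duality route is clean and matches the paper's reduction when $-b\in\inte(\supp f)$: then $h_b(y)=e^{-\langle b,y\rangle}f^\circ(y)$ is nondegenerate and integrable by Fact~2, the pointwise convergence $h_{b_m}\to h_b$ is trivial, and the preceding displayed statement applies. But the obstacle you flag is more serious than ``formal bookkeeping.'' When $-b\notin\inte(\supp f)$ the function $h_b$ need not be integrable at all (e.g.\ for $f=\1_{[0,1]}$ and $b\ge 0$ one has $h_b(y)=e^{-by-\max(0,y)}$, unbounded for $y\le 0$), so the preceding statement cannot be invoked. Passing to the symmetral $\psi_{\operatorname{sym},-b}$ restores integrability, but then the dual function becomes $h_b=(S_{-b}e^{-\psi_{\operatorname{sym},-b}})^\circ$, which depends on $b$ through the symmetrization and not only through the linear factor $e^{-\langle b,y\rangle}$. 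Your one-line pointwise-convergence argument (``$f^\circ(y)$ is independent of $m$'') no longer applies; one would need a separate continuity result for $b\mapsto(f_{\operatorname{sym},-b})^\circ$, which is essentially as much work as the Corollary itself. The paper's direct triangle-inequality proof sidesteps this entirely.
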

\vskip 4mm

\begin{proof}
We have that
$$
\forall x\in\mathbb R^{n}:
\hskip 3mm
\|Tx\|_2-t\leq\psi(x-b_{m})
$$
is equivalent to
$$
\forall y\in\mathbb R^{n}:
\hskip 3mm
\|T(y+b_{m})\|_2-t\leq\psi(y).
$$
We put
$$
B_{m}
=\{(T,t):\hskip 1mm\|T(y+b_{m})\|_2-t\leq\psi(y)\hskip 1mm \forall y\in\mathbb R^{n}\}
$$
and
$$
B_{}
=\{(T,t):\hskip 1mm\|T(y+b_{})\|_2-t\leq\psi(y)\hskip 1mm \forall y\in\mathbb R^{n} \}.
$$
Then
\begin{eqnarray*}
\max\{e^{-t}\det T:(T,t)\in B_{m}\}
=e^{\|T(b-b_{m})\|_2}\max\{e^{-t-\|T(b-b_{m})\|_2}\det(T):(T,t)\in B_{m}\}.
\end{eqnarray*}
Since
$$
\|T(y+b_{m})\|_2\geq\|T(y+b)\|_2-\|T(b-b_{m})\|_2
$$
we get
\begin{eqnarray*}
\max\{e^{-t}\det T:(T,t)\in B_{m}\}
\leq e^{\|T(b-b_{m})\|_2}\max\{e^{-s}\det(T):(T,s)\in B\}.
\end{eqnarray*}
It follows that
\begin{eqnarray*}
\limsup_{m\to\infty}\max\{e^{-t}\det T:(T,t)\in B_{m}\}
\leq \max\{e^{-s}\det(T):(T,s)\in B\}.
\end{eqnarray*}
Now we interchange the roles of $b$ and $b_{m}$ and get
\begin{eqnarray*}
\max\{e^{-s}\det T:(T,s)\in B\}
\leq \liminf_{m\to\infty}\max\{e^{-t}\det(T):(T,t)\in B_{m}\}.
\end{eqnarray*}	
\end{proof}

\vskip 2mm
\noindent
The proof of  Theorem \ref{lownerfcn} is next.
\begin{proof}
Let  \( f=e^{-\psi}\) be an integrable log-concave function with positive integral. We put 
$$ 
I_f:=\min\left\lbrace \int_{\R^n}e^{-\|Ax\|_2+t}dx: A\in \mathcal{A},t\in \R, \|Ax\|_2-t\le \psi(x) \right\rbrace 
$$
and 
$$
I_f(b):= \min\left\lbrace \int_{\R^n}e^{-\|Ax\|_2+t}dx: A\in \mathcal{A}(b),t\in \R, \|Ax\|_2-t\le \psi(x) \right\rbrace. 
$$
It follows from (\ref{equivalent2}) that \( I_f=\min_{b\in \R^n}I_f(b)\). 
\par
\noindent
By the reduction arguments in Section \ref{Redux}, 
\begin{eqnarray*}
I_f(b) &=& \min\left\lbrace \int_{\R^n}e^{-\|Ax\|_2+t}dx: A\in \mathcal{A}(b),t\in \R, \|Ax\|_2-t\le \psi(x) \right\rbrace\\
	    &=& n!\vol_n(B_2^n) \   \min\left\{ \frac{e^t}{\det T}: T\in S_+, t\in \R, \|Tx\|_2-t\le \psi(x-b)\right\}\\
	    &=&n!\vol_n(B_2^n) \   \left\lbrace  \max\lbrace  e^{-t}\det T:  T\in S_+, t\in \R, \|Tx\|_2-t\le \psi(x-b)     \rbrace   \right\rbrace ^{-1}.
	  \end{eqnarray*} 
Corollary \ref{Korollar}  implies  that \( I_f(b)\) is continuous in \(b\). To see that the minimum \( I_f\) exists, it suffices to show that the minimum is achieved on a compact set. 
\par
\noindent
Let \(0<d_0 < \|f\|_\infty\) be such that \( G_f(d_0)\) has positive volume.  Let  \( b_0\in G_f(d_0)\). Clearly, \( I_f\le I_f(b_0)\). Let \( r=\frac{I_f(b_0)}{d_0}-\vol_n(G_f(d_0))\). 
Then \(r>0\) since 
\[d_0\vol_n(G_f(d_0))< \int_{\R^n} f (x) dx \le I_f(b_0).\]
The last inequality holds as 
\begin{eqnarray*}
I_f(b_0) &=& \min\left\lbrace \int_{\R^n}e^{-\|Ax\|_2+t}dx: A\in \mathcal{A}(b_0),t\in \R, \|Ax\|_2-t\le \psi(x) \right\rbrace  \\
&=&  \min\left\lbrace \int_{\R^n}e^{-\|Ax\|_2+t}dx: A\in \mathcal{A}(b_0),t\in \R,  e^{-\|Ax\|_2 + t } \ge f(x) \right\rbrace .
\end{eqnarray*}
To finish the existence argument,   we need the notion of illumination body of a convex body $K$. This notion was introduced in \cite{Werner1994} as follows. Let $\delta >0$ be given.
The  illumination body $K^\delta$ of  $K$  is  
\[K^{\delta}=\{x\in\mathbb{R}^n: \vol_n( \mathrm{conv}[K,x])\leq \delta + \vol_n(K)\}\quad.\]
The illumination body is always convex, \cite{Werner1994}.  See, e.g.,  \cite{MordhorstWerner1, MordhorstWerner2} for recent developments.
\par
\noindent
 Let now  \( G^r= [G_f(d_0)]^r\) be the illumination body of \( G_f(d_0)\).  We will show that for \( b\notin G^r\), \( I_f(b)> I_f(b_0)\). 
 Suppose \( b\notin G^r\) and  let \(  A_0\in \mathcal{A}(b),t_0\in \R \) achieve \(I_f(b)\). Let \(h(x)=e^{-\|A_0x\|_2+t_0}\). Since $G_{f}(d_{0})\subseteq G_{h}(d_{0})$ there exists \(z\in bd(G^r)\cap G_h(d_0)\) such that 
\[\conv [z, G_f(d_0)]\subsetneq G_h(d_0).\]
It follows that 
\begin{eqnarray*}
I_f(b)&=&\int_{\R^n} h(x) dx > d_0 \cdot  \  \vol_n(G_h(d_0)) > d_0 \   \vol_n(\conv [z, G_f(d_0)]) \\
& = &
 d_0\cdot (r+\vol_n(G_f(d_0)))=I_f(b_0) .
\end{eqnarray*}
So for the minimization problem, we need only consider \( b\in G^r\) where \(G^r=[G(d_0)]^r\) is a compact set of \(\R^n\). The continuity of \(I_f(b)\) gives the existence of a minimizer.
\par
\noindent
Next we address the uniqueness. 
Recall that \( I_f=\min_b I_f(b) \) and  Proposition \ref{reducedprob} guarantees that for each \(b\in \R^n\) there is a unique, modulo $O(n)$,  minimizer. Hence it suffices to show that there is a unique \(b_0\) such that \(  I_f=\min_b I_f(b)=I_f(b_0)\).
\par
\noindent
We prove by way of contradiction. Suppose that there are \( b_1, b_2\) such that \( I_f=I_f(b_1)=I_f(b_2)\) and \( b_1\neq b_2\). Let the two minimizers corresponding to \(b_1\) 
and \( b_2 \) be \( (T_1, t_1) \in S_+ \times \mathbb{R} \) and \( (T_2, t_2) \in S_+ \times \mathbb{R}\),  respectively. $T_1$ and $T_2$ are unique up to an orthogonal transformation.
Then for all $x\in\mathbb R^{n}$
\begin{equation*}  \|T_1(x+b_1)\|_2-t_1\le \psi(x),  \hskip 5mm
 \|T_2(x+b_2)\|_2-t_2\le \psi(x) 
 \end{equation*} 
and  
$$\frac{e^{t_1}}{\det T_1 }=\frac{e^{t_2}}{\det T_2 }, \label{twominimizers}
$$
or, equivalently, taking logarithm on both sides, 
\be \label{Gleich}
t_1-\log \det T_1=t_2-\log \det T_2.
\ee
We distinguish two cases.
\par
\noindent
Case 1.  \( T_1\neq T_2\). Then we consider the function 
\[e^{-\left\|\frac{T_1+T_2}{2}x+\frac{T_1b_1+T_2b_2}{2}\right\|_2+\frac{t_1+t_2}{2}}.\]
Observe that 
\begin{eqnarray*}
	&&\left\|\frac{T_1+T_2}{2}x+\frac{T_1b_1+T_2b_2}{2}\right\|_2-\frac{t_1+t_2}{2}
	= \left\| \frac{1}{2} T_1(x+b_1)+\frac{1}{2} T_2(x+b_2)\right\|_2-\frac{t_1+t_2}{2}\\
	&& \hskip 10mm \le \frac{1}{2}\left( \|T_1(x+b_1)\|_2-t_1\right)+\frac{1}{2}\left( \|T_2(x+b_2)\|_2-t_2\right)
	\le \psi(x).
\end{eqnarray*}
But 
\[ \int_{\R^n} e^{-\left\|\frac{T_1+T_2}{2}x+\frac{T_1b_1+T_2b_2}{2}\right\|_2+\frac{t_1+t_2}{2}} dx=n!\vol_n(B_2^n) \frac{e^{\frac{t_1+t_2}{2}}}{\det(\frac{T_1+T_2}{2})}.\]
And by the Minkowski determinant inequality, 
$$
\left(\det\left(\frac{T_1+T_2}{2}\right) \right)^\frac{1}{n} \geq \frac{1}{2} \left(  \det (T_1) ^\frac{1}{n} + \det (T_2) ^\frac{1}{n}\right),
$$
from which it follows by concavity of the logarithm that
$$
\log\det\left(\frac{T_1+T_2}{2}\right) > \frac{1}{2} \left( \log\det T_1 + \log\det T_2 \right).
$$
The inequality is strict because the function \( T\to -\log\det T\) is strictly convex on the set of positive definite matrices.
Hence 
\begin{eqnarray*}
	\log \frac{e^{\frac{t_1+t_2}{2}}}{\det(\frac{T_1+T_2}{2})} &=& \frac{t_1+t_2}{2}-\log\det\left(\frac{T_1+T_2}{2}\right)\\
	&<& \frac{t_1}{2}-\frac{1}{2}\log \det T_1+\frac{t_2}{2}-\frac{1}{2}\log \det T_2\\
	&=& t_1-\log \det T_1=t_2-\log \det T_2.
\end{eqnarray*}
It follows that 
\[ \frac{e^{\frac{t_1+t_2}{2}}}{\det(\frac{T_1+T_2}{2})} < \frac{e^{t_1}}{\det T_1}, \]
which contradicts the fact that the latter is the minimum.
\par
\noindent
Case 2.  \( T_1=T_2\), modulo $O(n)$. It follows from (\ref{Gleich}) that \( t_1=t_2\). 
We show $b_{1}=b_{2}$.
We put 
\[ f_1(x)=e^{-\|T_1(x+b_1)\|_2+t_1}\]
and
\[f_2(x)=e^{-\|T_2(x+b_2)\|_2+t_2}=e^{-\|T_1(x+b_2)\|_2+t_1}.\]
We consider  super-level sets. For \( 0<s<e^{t_1}\), 
\begin {equation*} 
G_{f_1}(s)=-b_1+(t_1-\log s) T_1^{-1}B_2^n 
\end{equation*}
and
\begin {equation*}  
G_{f_2}(s)=-b_2+(t_1-\log s) T_1^{-1}B_2^n.
\end {equation*} 
For \( 0<s<\|f\|_\infty\), one has by definition of $f_1$ and $f_2$ that 
\[ G_f(s)\subset G_{f_1}(s)\cap G_{f_2}(s).\]
Now we claim that 
\begin {equation*} 
G_{f_1}(s)\cap G_{f_2}(s)\subset  -\frac{b_1+b_2}{2}+(t_1-\log s) T_1^{-1}B_2^n .
\end {equation*} 
If \( G_{f_1}(s)\cap G_{f_2}(s)=\emptyset\),  this inclusion is trivially true.  If not, let \( x\in G_{f_1}(s)\cap G_{f_2}(s)\). Then there exist \( u,v\in B_2^n\) such that 
\be
x= -b_1+(t_1-\log s) T_1^{-1}u=-b_2+(t_1-\log s) T_1^{-1}v. \label{x-darstell}
\ee
Thus 
\be
	x=\frac{x+x}{2}=-\frac{b_1+b_2}{2}+(t_1-\log s) T_1^{-1}\left( \frac{u+v}{2}\right) \label{intersectionpt}
\ee
Since \( \|(u+v)/2\|\le \|u\|/2+\|v\|/2\le 1\), 
\[
x=-\frac{b_1+b_2}{2}+(t_1-\log s) T_1^{-1}\left( \frac{u+v}{2}\right) \in -\frac{b_1+b_2}{2}+(t_1-\log s) T_1^{-1}B_2^n.
\]
In the following we show that there is  \( \widetilde{T_1}\) with \( \det(\widetilde T_1)>\det(T_1)\) satisfying 
\[
G_{f_1}(s)\cap G_{f_2}(s)\subset  -\frac{b_1+b_2}{2}+(t_1-\log s) \widetilde{T_1}^{-1}B_2^n\subset  -\frac{b_1+b_2}{2}+(t_1-\log s) T_1^{-1}B_2^n .
\]
Both,  \( G_{f_1}(s)\cap G_{f_2}(s)\) and \( -\frac{b_1+b_2}{2}+(t_1-\log s) T_1^{-1}B_2^n\),  are closed sets and centrally symmetric with respect to the same center 
\( -\frac{b_1+b_2}{2}\).  
\par
\noindent
Next we observe that  \( G_{f_1}(s)\cap G_{f_2}(s)\) does not intersect the boundary of the ellipsoid \( -\frac{b_1+b_2}{2}+(t_1-\log s) T_1^{-1}B_2^n\). Indeed, If \( x\in G_{f_1}(s)\cap G_{f_2}(s)\) as represented in (\ref{intersectionpt}) is on the boundary of 
\( -\frac{b_1+b_2}{2}+(t_1-\log s) T_1^{-1}B_2^n\), it follows that \( u=v\in S^{n-1}\). Hence by (\ref{x-darstell}),  \( b_1=b_2\),   a contradiction.  
\par
\noindent
Therefore \( G_{f_1}(s)\cap G_{f_2}(s)\) is a convex body  such that 
\[\left( G_{f_1}(s)\cap G_{f_2}(s)\right)  \cap \overline{\left( -\frac{b_1+b_2}{2}+(t_1-\log s) T_1^{-1}B_2^n\right) ^c}=\emptyset,\]
and thus 
\[ dist\left( G_{f_1}(s)\cap G_{f_2}(s), \overline{\left( -\frac{b_1+b_2}{2}+(t_1-\log s) T_1^{-1}B_2^n\right) ^c}\right) >0,\]
where \( dist(A,B)=\inf\{ \|x-y\|_2, x\in A, y\in B\}\).  Hence  we may shrink the ellipsoid 
\( -\frac{b_1+b_2}{2}+(t_1-\log s) T_1^{-1}B_2^n\) with respect to the center \( -(b_1+b_2)/2\) homothetically to get 
a new ellipsoid \( -\frac{b_1+b_2}{2}+(t_1-\log s) \widetilde {T_1}^{-1}B_2^n\)
such that still
\[G_{f_1}(s)\cap G_{f_2}(s)\subset  -\frac{b_1+b_2}{2}+(t_1-\log s) \widetilde{T_1}^{-1}B_2^n\]
and such that 
\( -\frac{b_1+b_2}{2}+(t_1-\log s) \widetilde {T_1}^{-1}B_2^n\) intersects the boundary of  \( G_{f_1}(s)\cap G_{f_2}(s)\).
Given such  a \( \widetilde {T_1}^{-1}\), it follows from 
\[G_f(s)\subset G_{f_1}(s)\cap G_{f_2}(s)\subset  -\frac{b_1+b_2}{2}+(t_1-\log s) \widetilde{T_1}^{-1}B_2^n\]
that 
\[ \left\| \widetilde{T_1}\left(x+\frac{b_1+b_2}{2}\right)\right\|_2-t_1
\le \psi(x).\]
However 
\[ \int_{\R^n}  e^{-\left\| \widetilde{T_1}\left(x+\frac{b_1+b_2}{2}\right)\right\|_2+t_1} dx=n!\vol(B_2^n) \frac{e^{t_1}}{\det \widetilde{T_1}}< n!\vol(B_2^n) \frac{e^{t_1}}{\det T_1}, 
\]
which is a contradiction. 
\par
\noindent
Consequently, we have proved that \( b_1=b_2\). 
\end{proof}

\section{John function and duality}

\subsection{The John function of Alonso-Guti{\'e}rrez,   Merino, Jim{\'e}nez, 
	and Villa}

A notion of a  John ellipsoid function has already been  introduced in \cite{Alonso-Gutiérrez2017}, Theorem 1.1.
We first recall the definition from this work.

\begin{theorem}[\cite{Alonso-Gutiérrez2017}] \label{johnfunction}
	Let \(f:\R^n\to \R\) be an integrable log-concave function. There exists  a unique   solution \( (s_0, A_0 )\in \mathbb{R} \times \mathcal{A}\)  to the maximization problem
\begin{equation} \label{JE}
\max\{ s |\det A| :  s \le \|f\|_\infty, A\in \mathcal{A} \} \hskip 3mm  \text{subject to} \hskip 3mm   s \  \1_{A B_2^n}\le f.
\end{equation}
$A_0$  is unique  up to  right orthogonal transformations. 
Then $s_0  \  \1_{A_0 B_2^n}$ is called the John ellipsoid of $f$, $J(f)=s_0  \  \1_{A_0 B_2^n}$.
\end{theorem}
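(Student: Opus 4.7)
The plan is to follow the template set by the proof of Proposition \ref{equivreducedprob}, but with the ellipsoidal function required to lie below $f$ rather than above it. First I would observe that for $s \leq \|f\|_\infty$, the constraint $s\1_{AB_2^n} \leq f$ is equivalent to $AB_2^n \subseteq G_f(s)$, reducing (\ref{JE}) to the one-parameter problem
$$
\max_{0 < s \leq \|f\|_\infty} \phi_f(s), \qquad \phi_f(s) := s \cdot \max_{\{A\in\mathcal{A}:\,AB_2^n \subseteq G_f(s)\}} |\det A|.
$$
By Fact 1, for every such $s$ the set $G_f(s)$ is a convex body with nonempty interior, so the inner maximum is attained uniquely at the John ellipsoid $J(G_f(s))$; parameterising $A = T+a$ with $T \in S_+$ via polar decomposition (cf. Section \ref{Redux}) then identifies the optimal $A$ modulo right orthogonal transformations.

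Next I would establish that $\phi_f$ is continuous on $(0, \|f\|_\infty)$ and satisfies a multiplicative concavity inequality. Continuity is a composition of the Hausdorff-continuous level-set map $s \mapsto G_f(s)$ with the John-ellipsoid map, whose continuity is exactly what Lemma \ref{contiofellipsoid} provides. For the concavity, if $A_i = T_i + a_i$ is optimal at $s_i$ with $T_i \in S_+$, log-concavity of $f$ gives
$$
((1-\lambda)T_1+\lambda T_2)B_2^n + (1-\lambda)a_1+\lambda a_2 \,\subseteq\, (1-\lambda)G_f(s_1)+\lambda G_f(s_2) \,\subseteq\, G_f(s_1^{1-\lambda}s_2^\lambda),
$$
and Minkowski's determinant inequality together with the arithmetic-geometric mean inequality yields
$$
\phi_f(s_1^{1-\lambda}s_2^\lambda) \geq \phi_f(s_1)^{1-\lambda}\phi_f(s_2)^\lambda,
$$
exactly mirroring Lemma \ref{propofxi}.

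For the boundary behaviour, $\phi_f(s) \leq s\vol_n(G_f(s))/\vol_n(B_2^n)$ by definition, and since $\vol_n(G_f(\cdot))$ is nonincreasing with $\int_0^{\|f\|_\infty}\vol_n(G_f(r))\,dr = \int f = \|f\|_1 < \infty$, we have $s\vol_n(G_f(s)) \leq \int_0^s \vol_n(G_f(r))\,dr \to 0$ as $s \to 0^+$, so $\phi_f$ extends continuously to $0$ at the left endpoint. Together with boundedness at $\|f\|_\infty$, existence of a maximizer on $(0,\|f\|_\infty]$ follows.

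The main obstacle is uniqueness of $s_0$, handled by a strict convexity argument. Supposing two distinct levels $s_1 \neq s_2$ both realize the maximum, the multiplicative concavity inequality forces equality in both Minkowski's determinant inequality and in the AM-GM step above. Equality in Minkowski's inequality for positive-definite matrices gives $T_1 = \alpha T_2$ for some $\alpha > 0$, and equality in AM-GM then forces $|\det T_1| = |\det T_2|$, so $\alpha = 1$ and $T_1 = T_2$. But then $\phi_f(s_1) = s_1|\det T_1|$ and $\phi_f(s_2) = s_2|\det T_1|$, so $\phi_f(s_1) = \phi_f(s_2)$ forces $s_1 = s_2$, a contradiction. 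Once $s_0$ is unique, $A_0 B_2^n = J(G_f(s_0))$ is the unique John ellipsoid of a convex body, and $A_0$ is therefore unique modulo right orthogonal transformations, completing the proof.
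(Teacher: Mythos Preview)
Your argument is correct, but it takes a different route from the paper. The paper's stated aim in this section is to \emph{derive} Theorem \ref{johnfunction} from its own machinery: it writes $A=T-b$ and decomposes the problem as $J=\max_{b\in\R^n}J_f(b)$ with
\[
J_f(b)=\max\{s\det T:\ T\in S_+,\ s\1_{TB_2^n}\le f(\cdot-b)\},
\]
then invokes Proposition~\ref{equivreducedprob} for existence and uniqueness of $J_f(b)$ at each fixed $b$, Lemma~\ref{continuityI(b)} for continuity of $b\mapsto J_f(b)$, and a level-set argument to confine $b$ to a compact set. Uniqueness is argued by assuming two optimal centers $b_1\neq b_2$ and producing the averaged competitor $\sqrt{t_1t_2}\,\1_{\frac{T_1+T_2}{2}B_2^n+\frac{b_1+b_2}{2}}$, with the strict concavity of $T\mapsto\log\det T$ giving the contradiction. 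You instead parameterize by the height $s$ and work directly with $\phi_f(s)=s\cdot\max\{|\det A|:AB_2^n\subset G_f(s)\}$ --- this is exactly the function $\phi_f$ mentioned inside the proof of Lemma~\ref{propofxi} as the one introduced in \cite{Alonso-Gutiérrez2017}, so your proof is essentially the original one rather than the paper's rederivation. Your route is shorter because the inner maximum at each $s$ is the genuine John ellipsoid of $G_f(s)$, whose uniqueness is classical; the paper's route, by contrast, buys a demonstration that its L\"owner-side tools (Proposition~\ref{equivreducedprob}, Lemma~\ref{continuityI(b)}) are strong enough to recover the John side as well.

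One small citation fix: Lemma~\ref{contiofellipsoid} concerns the map $K\mapsto\max_{T\in S_+,\,TB_2^n\subset K}\det T$, i.e.\ the largest \emph{origin-centered} ellipsoid inside $K$, which equals $J(K\cap(-K))$. For your $\phi_f$ you need continuity of the full John-ellipsoid map $K\mapsto J(K)$ with arbitrary center; this is classical (the paper itself cites \cite{Gruenbaum1963} for it inside the proof of Lemma~\ref{contiofellipsoid}), but it is not literally what Lemma~\ref{contiofellipsoid} states.
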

\par
\noindent
Note that, as for the L\"owner function, $J(f)=s_0  \  \1_{A_0 B_2^n}$ up to an orthogonal transformation.
\par
\noindent
We show that Theorem \ref{johnfunction} can be obtained from  Proposition \ref{equivreducedprob} and  Lemma \ref{continuityI(b)}. 
However, it seems that  Theorem \ref{lownerfcn} cannot be obtained immediately from Theorem \ref{johnfunction}
as the optimization in (\ref{JE}) is over all affine maps, i.e., translation is allowed under the constraint that $s\1_{AB_2^n}\le f$. 
To see how Theorem \ref{johnfunction} follows from  Proposition \ref{equivreducedprob} and Lemma \ref{continuityI(b)}, 
we first rewrite (\ref{JE}) in  Theorem \ref{johnfunction}. Let \(A=T-b\), 
\begin{eqnarray*} 
	s_0\det A_0 &=& \max \{ s \det A:  s\le \|f\|_\infty, A\in \mathcal{A},          s \1_{AB_2^n}\le f \} \nonumber \\
	&=& \max \{ s \det T: s\le \|f\|_\infty, T\in S_+,  b\in \R^n,         s\1_{TB_2^n-b}(x)\le f(x) \   \forall x \in \mathbb{R}^n \}  \nonumber \\
	&=& \max \{ s\det T: s\le \|f\|_\infty, T\in S_+,  b\in \R^n,          s\1_{TB_2^n}(x+b)\le f(x) \  \forall x \in \mathbb{R}^n \} \nonumber \\
	&=& \max \{ s\det T: s\le \|f\|_\infty, T\in S_+,  b\in \R^n,          s\1_{TB_2^n}(x)\le f(x-b)\  \forall x \in \mathbb{R}^n \} \nonumber \\
	&=& \max_{b \in \R^n}\ \  \max\{ s \det T: s\le \|f\|_\infty, T\in S_+,         s\1_{TB_2^n}(x)\le f(x-b) \   \forall x \in \mathbb{R}^n\} 
\end{eqnarray*}
If we put \( J=s_0\det A_0\) and 
\[
J_f(b)=\max\{ s \det T:  s\le \|f\|_\infty, T\in S_+,         s \1_{TB_2^n}(x)\le f(x-b)\   \forall x \in \mathbb{R}^n  \}, 
\]
then \( J=\max_{b\in \R^n} J_f(b)\). Note also that \( J_f(b)\) is continuous in \(b\) by   Lemma \ref{continuityI(b)}.
\vskip 2mm
\noindent
We show  now that the existence of the John function follows from  Proposition \ref{equivreducedprob} and Lemma \ref{continuityI(b)}.
\par 
\noindent
{\bf Existence of the John function in Theorem \ref{johnfunction}.} 
Recall that 
existence and uniqueness of \(J_f(b)\) are proved in  Proposition \ref{equivreducedprob} . Choose \(b'\in \R^n\) such that \( J_f(b')>0\). Now let \( \eps=J_f(b')\). Since \(f\) is integrable, there exists \(\delta(\eps)\) such that 
\[ \int_0^{\delta(\eps)} \vol_n(G_f(s))ds<\eps.\] 
Then for \( b\notin G_f(\delta(\eps))\), \( J_f(b)<\eps\). In fact, \[ J_f(b)\le \int_0^{\delta(\eps)} \vol_n(G_f(s))ds<\eps.\] 
Hence  \[ \max_{b\in \R^n} J_f(b)= \max_{b\in G_f(\delta(\eps))} J_f(b).\]
Since \( G_f(\delta(\eps))\) is compact  and \( J_f(b)\) is continuous in \(b\) by    Lemma \ref{continuityI(b)}, \( \max_{b\in \R^n} J_f(b)= \max_{b\in G_f(\delta(\eps))} J_f(b)\) exists. 
\par 
\noindent
We include the uniqueness argument for the reader's convenience.
\par 
\noindent
{\bf Uniqueness of the John function in Theorem \ref{johnfunction}.} Suppose that \( \max_{b\in \R^n} J_f(b)= J_f(b_1)=J_f(b_2)\) for some \( b_1\neq b_2\). If \(b_1=b_2\), then the solution is unique  modulo $O(n)$, by  Proposition \ref{equivreducedprob} . Suppose that \( t_1,t_2, T_1, T_2\) are maximizers satisfying 
\[ J_f(b_1)= t_1\det T_1\ \ \ \text{and} \ \ \ J_f(b_2)=t_2\det T_2.\]
\[ f(T_1v+b_1)\ge t_1\ \ \ \text{and} \ \ \  f(T_2v+b_2)\ge t_2, \ \forall\ v\in B_2^n.\]
Thus we have 
$$ \log t_1+ \log\det T_1= \log t_2+\log\det T_2\label{logmaximum} .$$
We may furthermore assume that \( t_1\neq t_2\). Indeed, observe that \( T_1B_2^n+b_1\) is the John ellipsoid of \(G_f(t_1)\)  and \( T_2B_2^n+b_2\) is the John ellipsoid of \( G_f(t_2)\). If \( t_1=t_2\), then $G_f(t_1)= G_f(t_2)$ and  by the uniqueness of John ellipsoid of a convex body \cite{GardnerBook2006,john1948,SchneiderBook}, \( T_1=T_2\). Hence without loss of generality, we assume \( t_1<t_2\). 
\par 
\noindent
Now we consider the function \[ \sqrt{t_1t_2}\1_{\frac{T_1+T_2}{2}B_2^n+\frac{b_1+b_2}{2}}.\]
We first show that 
$$ \sqrt{t_1t_2}\1_{\frac{T_1+T_2}{2}B_2^n+\frac{b_1+b_2}{2}}\le f.$$
In fact, by the concavity of \( \log f\), we have for any \( u\in B_2^n\), 
\begin{eqnarray*}
	\log f \left( \frac{T_1+T_2}{2}u+\frac{b_1+b_2}{2}\right)
	&\ge& \frac{1}{2}\log f \left(T_1u+b_1 \right)+\frac{1}{2}\log f \left(T_2u+b_2 \right)  \\
	&\ge & \frac{1}{2}\log t_1+\frac{1}{2}\log t_2=\log \sqrt{t_1t_2}.
\end{eqnarray*}
However, \( \sqrt{t_1t_2}\det (\frac{T_1+T_2}{2})>J_f(b_1)\). Indeed, using again the strict concavity of the function \( T\to \log\det T \) on positive definite operators we have 
\begin{eqnarray*}
	\log\left( \sqrt{t_1t_2}\det \left( \frac{T_1+T_2}{2}\right) \right) 
	&=& \frac{1}{2}\log t_1+\frac{1}{2}\log t_2+\log\det  \left( \frac{T_1+T_2}{2}\right) \\
	&>&\frac{1}{2}\log t_1+\frac{1}{2}\log t_2+\frac{1}{2}\log\det  T_1+\frac{1}{2}\log\det  T_2\\
	&=& \frac{1}{2}\left( \log t_1+\log\det  T_1\right) +\frac{1}{2}\left( \log t_2+\log\det   T_2\right) \\
	&=&  \log t_1+\log\det  T_1= \log(J_f(b_1)), 
\end{eqnarray*}
which is a contradiction to the assumption that \( \max_{b\in \R^n} J_f(b)= J_f(b_1)\). 
Consequently, \(b_1=b_2\). 

\subsection{Duality}
Let  $K$ be  a convex body in $\mathbb{R}^n$ such that $0$ is the center of the L\"owner ellipsoid $L(K)$. Then it holds that $(L(K))^\circ = J(K^\circ)$, where $J(K^\circ)$ is the John ellipsoid of $K^\circ$.
This duality relation carries over when we consider the convex bodies in the functional setting.

\begin{proposition}
Let $K$ be a convex body in  $\mathbb{R}^n$. Assume,  without loss of generality, that $0$ is the center of the L\"owner ellipsoid $L(K)$ of $K$.
Then
$$
(L(\1_K))^\circ = J((\1_K)^\circ).
$$
\end{proposition}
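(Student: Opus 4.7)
The strategy is a direct two-sided computation: I will evaluate the left- and right-hand sides of the asserted equality explicitly and observe that they coincide.

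\emph{Left-hand side.} Equation (\ref{LownerK}) gives $L(\1_K)(x)=e^{-n(\|T_{L(K)}^{-1}x\|_2-1)}$, so writing $\psi(x)=n\|T_{L(K)}^{-1}x\|_2-n$, formula (\ref{polar}) yields $(L(\1_K))^\circ(y)=e^{-\Lc\psi(y)}$. After the change of variables $z=T_{L(K)}^{-1}x$,
$$
\Lc\psi(y)\;=\;n+\sup_{z\in\R^n}\bigl(\langle z,T_{L(K)}^{t}y\rangle-n\|z\|_2\bigr),
$$
and the inner supremum equals $0$ if $T_{L(K)}^{t}y\in nB_2^n$ and $+\infty$ otherwise -- precisely the Legendre-transform-of-a-norm computation carried out in Section 3.2. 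Hence $(L(\1_K))^\circ(y)=e^{-n}\1_{n(T_{L(K)}^{t})^{-1}B_2^n}(y)$. Since $0$ is the center of $L(K)$, the identity $(T_{L(K)}^{t})^{-1}B_2^n=(L(K))^\circ$ combined with the classical polarity of John and Löwner ellipsoids of convex bodies (quoted in the introduction) gives $(L(K))^\circ=J(K^\circ)$, so
$$
(L(\1_K))^\circ\;=\;e^{-n}\,\1_{n\,J(K^\circ)}.
$$

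\emph{Right-hand side.} From (\ref{polar}), $(\1_K)^\circ(y)=\inf_{x\in K}e^{-\langle x,y\rangle}=e^{-h_K(y)}$, where $h_K$ is the support function of $K$. Under the standing hypothesis (together with the implicit mild assumption $0\in\intt(K)$ required for $e^{-h_K}$ to be a nondegenerate, integrable log-concave function), Theorem \ref{johnfunction} applies. The constraint $s\1_{AB_2^n}\le e^{-h_K}$ is equivalent to $AB_2^n\subseteq\{y:h_K(y)\le-\log s\}=(-\log s)\,K^\circ$, by positive homogeneity of $h_K$ together with $K^\circ=\{h_K\le1\}$. Setting $r=-\log s\in(0,\infty)$ and $A=rA'$ with $A'B_2^n\subseteq K^\circ$, the objective $s\,|\det A|$ becomes $e^{-r}r^n\,|\det A'|$, and the two optimizations decouple: $|\det A'|$ is maximized precisely when $A'B_2^n=J(K^\circ)$ (uniqueness of the John ellipsoid of $K^\circ$), while $\max_{r>0}e^{-r}r^n$ is attained at $r=n$. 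Therefore $J((\1_K)^\circ)=e^{-n}\,\1_{n\,J(K^\circ)}$, which matches the left-hand side exactly.

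\emph{Main obstacle.} There is no serious obstacle beyond bookkeeping; the proof is a verification. The only conceptually important point is that the \emph{same} scaling factor $n$ appears on both sides -- on the Löwner side via the minimization $\min_{t>0}e^t\,t^{-n}$ already carried out in Example 1, and on the John side via the maximization $\max_{r>0}e^{-r}r^n$. This coincidence of scalings is precisely the mechanism by which the Legendre transform converts the constrained Löwner minimization for $\1_K$ into the constrained John maximization for $(\1_K)^\circ$.
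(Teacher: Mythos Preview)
Your proof is correct and follows essentially the same approach as the paper: both compute the left-hand side by applying the Legendre transform to the explicit formula (\ref{LownerK}) for $L(\1_K)$, identify $(\1_K)^\circ=e^{-h_K}$, and then determine $J((\1_K)^\circ)$ by observing that the super-level sets are dilates $(-\log s)K^\circ$ so that the optimization decouples into the John ellipsoid of $K^\circ$ and the one-variable problem $\max_{r>0}e^{-r}r^n$. Your explicit substitution $A=rA'$ makes the decoupling slightly more transparent than the paper's phrasing, but the argument is the same.
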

\begin{proof}
It was shown in (\ref{LownerK}), that 
$ L (\1_{K})(x) =  e^{-n \left(\|T_{L(K)}^{-1} x\|_2 -1\right)}$. Then
\begin{eqnarray*}
\mathcal{L}\left(n \left(\|T_{L(K)}^{-1} x\|_2 -1\right)\right) (y) &=&n + \sup_{x \in \mathbb{R}^n} \langle x, y \rangle - n \|T_{L(K)}^{-1} x \|_2 \\
&=&n + \sup_{z \in \mathbb{R}^n} \langle z, T_{L(K)}^{t}y \rangle - n \|z\|_2 \\
&=&n + \sup_{z \in \mathbb{R}^n}  \|z\|_2 \left( \|T_{L(K)}^{t}y \|_2 - n \right)\\
&=& n +\begin{cases} 
		\infty & y\notin n (T_{L(K)}^{t})^{-1} B_2^n \\
		0 & y\in n (T_{L(K)}^{t})^{-1} B_2^n .
	\end{cases}
\end{eqnarray*}
Hence, 
$$
(L (\1_{K}))^\circ = e^{-n} \1_{n (T_{L(K)}^{t})^{-1} B_2^n}  = e^{-n} \1_{n J(K^\circ)}.
$$
The last identity holds as $L(K) = T_{LK} B^n_2$, and thus  $J(K^\circ)= (L(K))^\circ = (T_{LK} B^n_2)^\circ=(T_{L(K)}^{t})^{-1} B_2^n$.
Now we compute  \( (\1_K)^\circ =(e^{-I_K} )^\circ\),  where 
\[ I_K(x)=\begin{cases} 
0 & x\in K \\
\infty & x\notin K.
\end{cases}
	\]
The Legendre transform of \( I_K\) is 
\begin{eqnarray*}
	\mathcal{L}(I_K)(y) &=& \sup_{x\in \R^n} \langle x,y\rangle -I_K(x)
	                    = \sup_{x\in K} \langle x,y\rangle
	                    = h_{K}(y),
\end{eqnarray*}
where $h_K$ is the support function of $K$. \( K^\circ\) is a convex body since \( 0\) is contained in the interior of \(K\). Thus,  \( (\1_K)^\circ (y)=e^{-h_K(y)}\).
Next we compute the John function \( J((\1_K)^\circ)\) of \( (\1_K)^\circ\). For \( 0<s\le 1\),
\[ e^{-h_{K}(y)}\ge s \Leftrightarrow h_{K}(y) \le -\log s \Leftrightarrow y\in (-\log s )K^\circ. \]
So the super-level set of \((\1_K)^\circ\) at \(s\) is \(G_{(\1_K)^\circ}(s)=(-\log s)K^\circ\). Moreover, 
$$
J(-\log s\  K^\circ)=  -\log s \  J(K^\circ) =  -\log s  (L(K) )^\circ
$$
and $\max_{s} s (-\log s)^n$ is reached at $s=e^{-n}$. Thus $J((\1_K)^\circ)= e^{-n} \1_{n J(K^\circ)}$.

\end{proof}

\vskip 3mm
\noindent
In a functional  context, we view  as \textit{ellipsoidal functions} or, {\em ellipsoids} in short,  functions of the form 
\[ t \1_\mathcal{E} \text{ and }  \exp(-\|Tx+a\|_2+t), \]
 where \( \mathcal{E}\) is an ellipsoid in \(\R^n\) and \(t\in \R, a\in \R^n, T\in S_+\). 
\par
\noindent
We want to establish a duality relation between the ellipsoidal functions, similar to the one that holds for convex bodies. 
As in the case of convex bodies, we can only expect such a duality relation if we take polarity with respect to the proper 
point. Indeed, 
let \(f=e^{-\psi}\) be a log-concave function.  Let $L(f) (x)= e^{- \|T_0x +a_0\| +t_0}$  be the L\"owner function of $f$. 
Let $b \in \mathbb{R}^n$. Then
\begin{eqnarray*} 
\mathcal{L}_b\left(\|T_0x +a_0\|_2 +t_0\right) (y) &=& t_0 +  \sup_{x \in \mathbb{R}^n} \langle x-b, y -b \rangle -  \|T_0x +a_0\|_2 \nonumber \\
&=& t_0 +  \sup_{z \in \mathbb{R}^n} \langle T_0^{-1} (z-a_0)-b, y -b \rangle -  \|z\|_2 \nonumber \\
&=& t_0 -   \langle b, y -b \rangle - \langle T_0^{-1} (a_0), y -b \rangle + \sup_{z \in \mathbb{R}^n} \langle z, T_0^{-1}(y -b) \rangle -  \|z\|_2  \nonumber\\
&=& t_0 -   \langle b, y -b \rangle - \langle T_0^{-1} (a_0), y -b \rangle + \sup_{z \in \mathbb{R}^n}  \|z\|_2 \left(\|T_0^{-1}(y -b)\|_2 -1\right) \nonumber \\
&=& t_0 -   \langle b, y -b \rangle - \langle T_0^{-1} (a_0), y -b \rangle +\begin{cases} 
		\infty & y\notin T_0  B_2^n + b \\
		0 & y\in T_0 B_2^n + b
	\end{cases}
\end{eqnarray*}
and $(L(f))^b = e^{- \mathcal{L}_{b}( \|T_0x +a_0\|_2 +t_0)}$ is again an ellipsoidal function if and only if $b = b_0= - T_0^{-1} a_0$.  In this case
$$
(L(f))^{-b_0} = e^{- \mathcal{L}_{-b_0}( \|T_0x +a_0\|_2 +t_0)} = e^{-t_0} \   \1_{T_0 B_2^n -b_0}.
$$
\vskip 2mm
\noindent
For log-concave functions $f=e^{-\psi}$ that are even, i.e.,  \( \psi(x)=\psi(-x)\), the point $b_0=0$ and such a duality relation holds.
\par
\noindent
\begin{proposition}
If \(f=e^{-\psi}\) is an even log-concave function, then \( (L(f))^\circ=J(f^\circ)\). 
\end{proposition}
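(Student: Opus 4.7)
The plan is to reduce both the L\"owner function of $f$ and the John function of $f^\circ$ to the ``centered'' case (no translation in the ellipsoidal data) using the evenness symmetry and the uniqueness parts of Theorems \ref{lownerfcn} and \ref{johnfunction}, and then match the two by exploiting polarity in both directions. Throughout, I will use that for even log-concave $f$ one has $0\in\inte(\supp f)$, so Fact~2 applies and $f^\circ$ is again nondegenerate integrable and log-concave.

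First I would show that the optimal center $b_0$ in the L\"owner problem is $0$. Since $\psi$ is even, the substitution $x\mapsto -x$ turns the constraint $\|T(x+b)\|_2-t\le\psi(x)$ into $\|T(x-b)\|_2-t\le\psi(x)$, so $(T,b,t)$ and $(T,-b,t)$ are simultaneously feasible with the same objective value $e^{-t}\det T$. The uniqueness of $b_0$ established in the proof of Theorem~\ref{lownerfcn} then forces $b_0=-b_0$, hence $b_0=0$. Consequently $L(f)(x)=e^{-\|T_0 x\|_2+t_0}$ with $T_0\in S_+$, and the Legendre-transform computation already carried out in Section~3 gives
\[
(L(f))^\circ(y) = e^{-t_0}\,\1_{T_0 B_2^n}(y).
\]
A parallel symmetry argument (using that $f^\circ$ is even by a direct change of variables) applied to the John function yields $J(f^\circ)=s_0^J\,\1_{T_0^J B_2^n}$ with $T_0^J\in S_+$.

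Next I would set up two polarity inequalities. From $L(f)\ge f$ and the order-reversing property of polarity (Proposition~\ref{propoflegendretransf}(iv)), $(L(f))^\circ\le f^\circ$, so $e^{-t_0}\,\1_{T_0 B_2^n}\le f^\circ$ and $(e^{-t_0},T_0)$ is admissible for the John maximization of $f^\circ$, giving $s_0^J\det T_0^J\ge e^{-t_0}\det T_0$. Conversely, starting from $J(f^\circ)\le f^\circ$ and polarizing, the bipolar theorem for even log-concave functions (applicable since evenness together with log-concavity forces $f(0)=\|f\|_\infty$) yields $(J(f^\circ))^\circ\ge f^{\circ\circ}=f$. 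A short Legendre computation (analogous to the one in Section~3) gives
\[
(s_0^J\,\1_{T_0^J B_2^n})^\circ(x)=e^{-\|T_0^J x\|_2-\log s_0^J},
\]
which is an ellipsoidal L\"owner-type function dominating $f$, with $T=T_0^J$, $t=-\log s_0^J$, and center $0$. Optimality of $L(f)$ then gives $\tfrac{e^{t_0}}{\det T_0}\le\tfrac{1}{s_0^J\det T_0^J}$, i.e.\ $s_0^J\det T_0^J\le e^{-t_0}\det T_0$.

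Combining the two inequalities yields $s_0^J\det T_0^J=e^{-t_0}\det T_0$, so both $(s_0^J,T_0^J)$ and $(e^{-t_0},T_0)$ achieve the John maximum. The uniqueness in Theorem~\ref{johnfunction} then forces the associated ellipsoidal functions to coincide, whence $J(f^\circ)=e^{-t_0}\,\1_{T_0 B_2^n}=(L(f))^\circ$. The main obstacle I anticipate is executing the symmetry/uniqueness step cleanly: it is precisely this reduction that makes polarity act in a clean ellipsoidal-to-indicator fashion, and it is exactly the step that fails without evenness, explaining why the analogous duality need not hold in general as emphasized in the introduction.
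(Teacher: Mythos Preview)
Your proof is correct, but it takes a slightly different route from the paper's. Both arguments start by using evenness and uniqueness to force the optimal center to be $b_0=0$ for $L(f)$ and likewise for $J(f^\circ)$. From there the paper proceeds \emph{directly}: the reduction in Section~\ref{Redux} already established that the constraint $\|Tx\|_2-t\le\psi(x)$ is equivalent, via Legendre transform, to $e^{-t}\1_{TB_2^n}\le f^\circ$; hence the centered L\"owner problem for $f$ and the centered John problem for $f^\circ$ are literally the same optimization, so their optimizers coincide. You instead run a two-sided sandwich: polarizing $L(f)\ge f$ shows the L\"owner optimum is John-feasible for $f^\circ$, and polarizing $J(f^\circ)\le f^\circ$ together with the bipolar theorem shows the John optimum is L\"owner-feasible for $f$, yielding matching objective values and hence, by uniqueness in Theorem~\ref{johnfunction}, identical solutions. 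Your approach is a bit more self-contained and makes the role of order-reversing duality very explicit; the paper's is shorter because it simply reuses the constraint equivalence already proved. One small remark: your aside that $f(0)=\|f\|_\infty$ is not needed for the bipolar step (Proposition~\ref{propoflegendretransf}(i) suffices), though it does no harm.
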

\begin{proof}
Let \( L(f)=e^{-\|T_0(x+b_0)\|_2+t_0}\) be the L\"owner function of \(f\). By Theorem \ref{lownerfcn}, \( (T_0, b_0, t_0)\) are the unique solution, modulo $O(n)$,  to the optimization problem 
  \[ n!\vol_n(B_2^n) \  \min_{b\in \R^n} \min\left\{\frac{e^t}{\det T}: T\in S_+, t\in \R, e^{-t}\1_{TB_2^n}(y)\le (f_b)^\circ(y) \right\},\]
  where \( f_b(x)=S_{-b}f \).
As \(f\) is even, \(b_0=0\). Hence the above minimum is obtained when \(b=0\), that is,
\begin{eqnarray*}
& &	\min_{b\in \R^n}\min\left\{\frac{e^t}{\det T}: T\in S_+, t\in \R, e^{-t}\1_{TB_2^n}(y)\le (f_b)^\circ(y)\right\}\\
&=& \min \left\{ \frac{e^t}{\det T}: T\in S_+, t\in \R, e^{-t}\1_{TB_2^n}(y)\le f^\circ \right\}\\
&=& \left( \max \left\lbrace  e^{-t}\det T: T\in S_+, t\in \R, e^{-t}\1_{TB_2^n}(y)\le f^\circ\right\rbrace \right) ^{-1}.
\end{eqnarray*}
In other words, \( (T_0, t_0)\) also solves 
\be 
\max \left\lbrace  e^{-t}\det T: T\in S_+, t\in \R, e^{-t}\1_{TB_2^n}(y)\le f^\circ\right\rbrace.    \label{evenjohnfcn3}
\ee
Now observe that \( f^\circ\) is an even function. In fact, since \( \psi(x)=\psi(-x)\), 
\begin{eqnarray*}
	\mathcal{L}(\psi)(-y) &=& \sup_{x\in \R^n} \langle x,-y \rangle -\psi(x)
	                      = \sup_{x\in \R^n} \langle -x,-y \rangle -\psi(-x)
	                      = \sup_{x\in \R^n} \langle x,y \rangle -\psi(x)\\
	                      &=& \mathcal{L}(\psi)(y).	                    
\end{eqnarray*}
Thus, \( f^\circ(-y)=e^{-\mathcal{L}(\psi)(-y)}=e^{-\mathcal{L}(\psi)(y)}=f^\circ(y) \).
By the eveness of \(f^\circ\), the maximum
\be 
\max_{b\in \R^n}\max \left\lbrace  e^{-t}\det T: T\in S_+, t\in \R, e^{-t}\1_{TB_2^n+b}(y)\le f^\circ\right\rbrace \label{evenjohnfcn4}
\ee
is achieved at the same solution to (\ref{evenjohnfcn3}). But the solution to (\ref{evenjohnfcn4}) gives the John ellipsoid function of \( f^\circ\). Therefore, 
\( J(f^\circ)=e^{-t_0}\1_{T_0B_2^n}\). It follows from a routine computation that 
\[ (L(f))^\circ=\left( e^{-\|T_0 x\|_2+t_0}\right)^\circ = e^{-t_0}\1_{T_0B_2^n}=J(f^\circ).\] 
\end{proof}
\vskip 3mm
\noindent
However, it is  not true in general that  \( L(f)^{b_0}=J(f^{b_0})\) or \( L(f^{b_0})=J(f)^{b_0}\).
We give a $1$-dimensional counter example. The higher dimensional counter example
is constructed accordingly.
\par 
\noindent
{\bf A counter example}. Let $f(x) = e^{-\psi(x)}$ be the log-concave function such that 
\begin{equation*} 
\psi(x) = 
\begin{cases}  4 x^2  & x \leq 0 \\
 x^2     & x > 0.  \end{cases}
 \end{equation*}
We compute that the L\"owner function of \(f\) is 
$$
L(f) = e^{- \frac{4}{\sqrt{5}}\left |x- \frac{3}{8 \sqrt{5}}\right| + \frac{1}{2}}
$$
and that the polar of $L(f)$ with respect to $ \frac{3}{8 \sqrt{5}}$ is 
$$
\left(L(f)\right)^{ \frac{3}{8 \sqrt{5}}} = e^{-\frac{1}{2} }\1_{[-\frac{4}{\sqrt{5}}, \frac{4}{\sqrt{5}}] + \frac{3}{8 \sqrt{5}}}.
$$
The polar of $f$ with respect to $ \frac{3}{8 \sqrt{5}}$ is
$$
\left(f\right)^{ \frac{3}{8 \sqrt{5}}} = e^{ \frac{3}{8\sqrt{5}}\left (x- \frac{3}{8 \sqrt{5}}\right) - \frac{1}{16} \left (x- \frac{3}{8 \sqrt{5}}\right)^2 } \1_{\left(-  \infty, \frac{3}{8\sqrt{5}}\right] } +
e^{ \frac{3}{8\sqrt{5}}\left (x- \frac{3}{8 \sqrt{5}}\right) - \frac{1}{4} \left (x- \frac{3}{8 \sqrt{5}}\right)^2 } \1_{\left(\frac{3}{8\sqrt{5}}, \infty\right) }.
$$
To find the John ellipsoid $J\left(\left(f\right)^{ \frac{3}{8 \sqrt{5}}}\right)$ of $\left(f\right)^{ \frac{3}{8 \sqrt{5}}}$ we  determine the super-level sets of $\left(f\right)^{ \frac{3}{8 \sqrt{5}}}$,
\begin{eqnarray*}
G_{\left(f\right)^{ \frac{3}{8 \sqrt{5}}}} (s) &=& \left\{x :  \left(f\right)^{ \frac{3}{8 \sqrt{5}}} \geq s\right\} \\
&=& \begin{cases} 
\left[\frac{3}{8 \sqrt{5}} + \frac{3- \left(9 - 80 \log s\right)^\frac{1}{2}}{\sqrt{5}},  \frac{3}{8 \sqrt{5}} + \frac{3+ \left(9 - 320 \log s\right)^\frac{1}{2}}{4 \sqrt{5}} \right], s \leq 1 \\
\left[\frac{3}{8 \sqrt{5}} + \frac{3- \left(9 - 320 \log s\right)^\frac{1}{2}}{4\sqrt{5}},  \frac{3}{8 \sqrt{5}} + \frac{3+ \left(9 - 320 \log s\right)^\frac{1}{2}}{4 \sqrt{5}} \right], s \geq 1 
\end{cases}\\
\end{eqnarray*}
and then maximize the function
\begin{eqnarray*}
h(s) = 
\begin{cases} 
\frac{s}{4 \sqrt{5}} \left( 4\left(9 - 80 \log s\right)^\frac{1}{2} + \left(9 - 320 \log s\right)^\frac{1}{2} -9 \right),  s \leq 1\\
 \frac{s}{2 \sqrt{5}} \left( 9 - 320 \log s\right)^\frac{1}{2}, s \geq 1.\\
\end{cases}\\
\end{eqnarray*}
If it were so that 
$$\left(L(f)\right)^{ \frac{3}{8 \sqrt{5}}} = e^{-\frac{1}{2} }\1_{[-\frac{4}{\sqrt{5}}, \frac{4}{\sqrt{5}}] + \frac{3}{8 \sqrt{5}}}= J\left(\left(f\right)^{ \frac{3}{8 \sqrt{5}}}\right),
$$
then the function $h$ would have its  maximum at $s=e^{-\frac{1}{2}} $ and  thus the derivative of $h$ at $s=e^{-\frac{1}{2}}$ should be $0$.
But $h^\prime \left(e^{-\frac{1}{2}}\right)\simeq -0.3538<0$.

\vskip 3mm
\noindent
{\bf Acknowledgement}
\par
\noindent
This material is based upon work supported by the National Science Foundation under Grant No. DMS-1440140 while the  authors were in residence at the Mathematical Sciences Research Institute in Berkeley, California, during the Fall 2017 semester. 
\par
\noindent
The authors want to thank F. Mussnig and the referee for the careful reading of the manuscript, the suggested improvements and corrections.

{}

\vskip 4mm
\noindent
Ben Li\\
{\small School of Mathematical Sciences}\\
{\small Tel Aviv University}\\
{\small Tel Aviv 69978,
	Israel}\\
{\small \tt liben@mail.tau.ac.il}\\ \\
\vskip 3mm
\noindent
Carsten Sch\"utt\\
{\small Mathematisches  Institut}\\
{\small Universit\"at Kiel}\\
{\small Germany }\\
{\small \tt schuett@math.uni-kiel.de}\\ \\
\vskip 3mm
\noindent
Elisabeth M. Werner\\
{\small Department of Mathematics \ \ \ \ \ \ \ \ \ \ \ \ \ \ \ \ \ \ \ Universit\'{e} de Lille 1}\\
{\small Case Western Reserve University \ \ \ \ \ \ \ \ \ \ \ \ \ UFR de Math\'{e}matique }\\
{\small Cleveland, Ohio 44106, U. S. A. \ \ \ \ \ \ \ \ \ \ \ \ \ \ \ 59655 Villeneuve d'Ascq, France}\\
{\small \tt elisabeth.werner@case.edu}\\ \\

 \end{document}